\newtheorem{theorem}{Theorem}[section]
\newtheorem{lemma}[theorem]{Lemma}
\newtheorem*{lemma*}{Lemma}
\newtheorem{proposition}[theorem]{Proposition}
\theoremstyle{definition}
\newtheorem{definition}[theorem]{Definition}
\newtheorem{example}[theorem]{Example}
\newtheorem{question}[theorem]{Question}
\theoremstyle{remark}
\numberwithin{equation}{section}
\renewcommand{\bar}[1]{\overline{#1}}
\newcommand{\xx}{\mathbb{X}}
\newcommand{\yy}{\mathbb{Y}}
\newcommand{\cc}{\mathbb{C}}
\newcommand{\dd}{\mathbb{D}}
\newcommand{\abs}[1]{\lvert#1\rvert}
\newcommand{\A}{\mathbb{A}}
\newcommand{\C}{\mathbb{C}}
\newcommand{\W}{\mathscr{W}}
\newcommand{\X}{\mathbb{X}}
\newcommand{\Y}{\mathbb{Y}}
\newcommand{\dtext}{\textnormal d}
\newcommand{\onto}{\xrightarrow[]{{}_{\!\!\textnormal{onto\,\,}\!\!}}}
\DeclareMathOperator{\dist}{dist}
\DeclareMathOperator{\re}{Re}
\DeclareMathOperator{\im}{Im}
\DeclareMathOperator{\loc}{loc}
\DeclareMathOperator{\Div}{div}
\DeclareMathOperator{\osc}{osc}
\def\leq{\leqslant}
\def\geq{\geqslant}
\def\le{\leqslant}
\def\ge{\geqslant}
\def\XXint#1#2#3{{\setbox0=\hbox{$#1{#2#3}{\int}$}\vcenter{\hbox{$#2#3$}}\kern-.5\wd0}}
\def\XXiint#1#2#3{{\setbox0=\hbox{$#1{#2#3}{\iint}$}\vcenter{\hbox{$#2#3$}}\kern-.5\wd0}}
\begin{document}
\title{Sobolev homeomorphic extensions}


\author[A. Koski]{Aleksis Koski}
\address{Department of Mathematics and Statistics, P.O.Box 35 (MaD) FI-40014 University of Jyv\"askyl\"a, Finland}
\email{aleksis.t.koski@jyu.fi}

\author[J. Onninen]{Jani Onninen}
\address{Department of Mathematics, Syracuse University, Syracuse,
NY 13244, USA and  Department of Mathematics and Statistics, P.O.Box 35 (MaD) FI-40014 University of Jyv\"askyl\"a, Finland
}
\email{jkonnine@syr.edu}
\thanks{  A. Koski was supported by the Academy of Finland Grant number 307023.
J. Onninen was supported by the NSF grant  DMS-1700274.}

\subjclass[2010]{Primary 46E35, 58E20}


\keywords{Sobolev homeomorphisms, Sobolev extensions, Douglas condition}

\begin{abstract} 
Let $\X$ and $\Y$ be $\ell$-connected Jordan domains, $\ell \in \mathbb N$, with rectifiable boundaries in the complex plane. We prove that any boundary homeomorphism $\varphi \colon \partial \X \onto \partial \Y$ admits a Sobolev homeomorphic extension $h \colon \overline{\X} \onto \overline{\Y}$ in $\W^{1,1} (\X, \C)$. If instead $\X$ has $s$-hyperbolic growth with $s>p-1$, we show the existence of such an  extension lies in the Sobolev class $\W^{1,p} (\X, \mathbb C)$ for $p\in (1,2)$. Our examples show that the assumptions of rectifiable boundary and hyperbolic growth  cannot be relaxed. We also consider the existence of $\W^{1,2}$-homeomorphic extensions subject to a given boundary data.
\end{abstract}

\maketitle
\section{Introduction}

Throughout this text $\X$ and $\Y$ are $\ell$-connected Jordan domains, $\ell=1,2, \dots$, in the complex plane $\mathbb C$. Their boundaries $\partial\X$ and $\partial\Y$ are thus a disjoint union of $\ell$ simple closed curves. If $\ell =1 $, these domains are simply connected and will just be called Jordan domains. In the simply connected case, the Jordan-Sch\"onflies theorem states that every homeomorphism $\varphi \colon  \partial \X \onto \partial \Y$ admits a continuous extension $h \colon \overline{\X }\to  \overline{\Y}$ which takes $\X$ homeomorphically onto $\Y$.   In the first part of this  paper we focus on a Sobolev variant of the Jordan-Sch\"onflies theorem. The most pressing demand for studying such  variants comes from the variational approach to Geometric Function Theory~\cite{AIMb, IMb, Reb} and  Nonlinear Elasticity~\cite{Anb, Bac, Cib}. Both theories share the compilation ideas to determine the infimum of  a given energy functional 
\begin{equation}\label{energ}
\mathsf E_\mathbb X [h] =  \int_\mathbb X \mathbf {\bf E}(x,h, Dh )\,  \dtext x\, , 
\end{equation}
among  orientation preserving homeomorphisms $h \colon \overline{\mathbb X} \onto \overline{\mathbb Y }$  in the  Sobolev space $\mathscr W^{1,p} (\X, \Y)$ with given boundary data $\varphi \colon \partial \X \onto \partial \Y$.  We denote such a class of mappings by $\mathscr H_\varphi^{1,p}(\mathbb X, \mathbb Y)$.   Naturally, a fundamental question to raise then is whether the class  $\mathscr H_\varphi^{1,p}(\mathbb X, \mathbb Y)$ is non-empty.

\begin{question}\label{existQuestion}  Under what conditions does a given boundary homeomorphism $\varphi \colon \partial \X \onto \partial \Y$ admit a homeomorphic extension $h \colon \overline {\X} \onto \overline{\Y}$
of Sobolev class $\W^{1,p} (\X, \C)$?
\end{question}

A necessary condition is that the mapping $\varphi$ is the Sobolev trace of some (possibly non-homeomorphic) mapping in $\W^{1,p}(\xx, \C)$. Hence to solve Question \ref{existQuestion} one could first study the following natural sub-question:
\begin{question}\label{Q2}
Suppose that a homeomorphism $\varphi : \partial \xx \to \partial \yy$ admits a Sobolev $\W^{1,p}$-extension to $\xx$. Does it then follow that $\varphi$ also admits a homeomorphic Sobolev $\W^{1,p}$-extension to $\xx$?
\end{question}

 Our main results, Theorem~\ref{thm:main} and its multiply connected variant (Theorem~\ref{thm:multiply}), give an answer to these questions when $p \in [1,2)$. The construction of such extensions is important not only to ensure the well-posedness of the related variational questions, but also for example due to the fact that various types of extensions were used to provide approximation results for Sobolev homeomorphisms, see \cite{HP, IKOapprox}. We touch upon the variational topics in Section~\ref{sec:mono}, where we provide an application for one of our results.  Apart from Theorem~\ref{thm:multiply} and its proof (\S\ref{anyplansguysz}), the rest of the paper deals with the simply connected case.

Let us start considering  the above questions in the well-studied setting of the Dirichlet energy, corresponding to $p=2$ above. The  Rad\'o
\cite{Ra}, Kneser \cite{Kn} and Choquet \cite{Ch} theorem  asserts  that if ${\mathbb Y}
\subset {\mathbb R}^2$ is a  convex domain then the harmonic extension of a
homeomorphism $\varphi \colon 
\partial {\mathbb X} \to \partial {\mathbb Y}$  is a univalent map from ${\mathbb X}$
onto ${\mathbb Y}$. Moreover, by a theorem of Lewy \cite{Le},
this univalent harmonic map has a non-vanishing Jacobian and is therefore a real analytic diffeomorphism in $\xx$. However, such an extension is not guaranteed to have finite Dirichlet energy in $\xx$. The class of boundary functions which admit a harmonic extension with finite Dirichlet energy was  characterized by Douglas~\cite{Do}.  The {\it Douglas condition} for a function $\varphi \colon \partial \mathbb D \onto \partial \Y$ reads as
\begin{equation}\label{eq:douglas}
\int_{\partial \mathbb D} \int_{\partial \mathbb D} \left| \frac{\varphi (\xi) - \varphi (\eta)}{ \xi - \eta }\right|^2 \abs{\dtext \xi } \,  \abs{\dtext \eta }  < \infty \, .
\end{equation}
The mappings satisfying this condition are exactly the ones that admit an extension with finite $\W^{1,2}$-norm. Among these extensions is the harmonic extension of $\varphi$, which is known to have the smallest Dirichlet energy among all extensions.

Note that the Dirichlet energy is also invariant with respect to a conformal change of variables in the domain $\xx$. Therefore thanks to the Riemann Mapping Theorem, when considering Question~\ref{existQuestion} in the case $p=2$, we may assume that $\X = \mathbb D$  without loss of generality. Now, there is no challenge to answer Question~\ref{existQuestion} when $p=2$ and $\Y$  is Lipschitz. Indeed, for any Lipschitz domain there exists a global bi-Lipschitz change of  variables $\Phi \colon \mathbb C \to \mathbb C$ for which $\Phi (\Y)$ is the unit disk. Since the finiteness of the Dirichlet energy is preserved under a bi-Lipschitz change of variables in the target, we may reduce Question~\ref{existQuestion} to the case when $\xx = \yy = \dd$, for which the Rad\'o-Kneser-Choquet theorem and the Douglas condition provide an answer. In other words, if $\Y$ is Lipschitz then the following are equivalent for a boundary homeomorphism $\varphi \colon \partial \mathbb D \to \partial \Y$

\begin{enumerate}
\item{$\varphi$ admits a $\W^{1,2}$-Sobolev homeomorphic extension $h \colon \overline{\mathbb D} \onto \overline{\Y}$}\label{pt:1}
\item{$\varphi$ admits $\W^{1,2}$-Sobolev extension to  $\mathbb D$}\label{pt:2}
\item{$\varphi$ satisfies the Douglas condition~\eqref{eq:douglas}}\label{pt:3}
\end{enumerate}

In the case when $1 \leq p < 2$, the problem is not invariant under a conformal change of variables in $\X$. However,  when $\xx$ is the unit disk and $\yy$ is a convex domain, a complete answer to Question~\ref{existQuestion} was provided by the following result of Verchota~\cite{Ve}.
\begin{proposition}\label{Verchota} Let $\Y$ be a convex domain, and let $\varphi : \partial \dd \onto \partial \Y$ be any homeomorphism. Then the harmonic extension of $\varphi$ lies in the Sobolev class $\W^{1,p}(\dd,\cc)$ for all $1 \le p < 2$.
\end{proposition}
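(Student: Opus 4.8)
The plan is to pass to the harmonic conjugate and reduce the claim to two classical facts about boundary data of bounded variation. I begin with two elementary reductions. Since $\Y$ is a bounded convex domain, its boundary $\partial\Y$ is a rectifiable Jordan curve of some finite length $L$ (the perimeter of a planar convex body is finite). Identifying $\theta\mapsto\varphi(e^{i\theta})$, a homeomorphism $\varphi\colon\partial\dd\onto\partial\Y$ is an injective continuous parametrization of this curve, so the $\cc$-valued $2\pi$-periodic function $\psi(\theta):=\varphi(e^{i\theta})$ has bounded variation, with total variation equal to $L$. Writing $h=u+iv$ for the harmonic (Poisson) extension of $\varphi$, the real components $u=P[\psi_1]$ and $v=P[\psi_2]$ are the Poisson extensions of the real-valued $BV$ functions $\psi_1=\re\psi$ and $\psi_2=\im\psi$. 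Since the Poisson extension takes values in the convex hull of its boundary data, $h$ maps into the bounded set $\overline{\Y}$ and is therefore bounded; hence it suffices to prove $\nabla u,\nabla v\in L^q(\dd)$ for every $q<2$.

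Fix one real component $w\in\{u,v\}$, with datum $\psi_j\in BV(\partial\dd)$ and derivative measure $\mu:=\psi_j'$, a finite signed measure with $\norm{\mu}\le L$. Let $\tilde w$ denote the harmonic conjugate of $w$ on the simply connected domain $\dd$, normalised by $\tilde w(0)=0$, so that $F:=w+i\tilde w$ is holomorphic on $\dd$ and $\abs{\nabla w}=\abs{F'}$ there. The key object is $G(z):=zF'(z)$, which is holomorphic on $\dd$ and, in polar coordinates, satisfies $G(re^{i\theta})=-i\,\partial_\theta\big[F(re^{i\theta})\big]$. A routine integration by parts shows that $\partial_\theta w(re^{i\theta})$ is precisely the Poisson integral $P[\mu](re^{i\theta})$; consequently the holomorphic function $iG$ has real part equal to $P[\mu]$. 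Now I would invoke Kolmogorov's theorem on the conjugate function in the form valid for Poisson integrals of finite measures: it yields $\sup_{0<r<1}\norm{G(re^{i\theta})}_{L^p(\partial\dd)}\le C_p\norm{\mu}\le C_pL$ for every $p\in(0,1)$, i.e.\ $G\in H^p(\dd)$ for all $p<1$. Next I would apply the Hardy--Littlewood embedding $H^p(\dd)\hookrightarrow L^{2p}(\dd,\dtext A)$ for holomorphic functions to conclude $G\in L^{2p}(\dd)$ for all $p<1$, that is, $G\in L^q(\dd)$ for all $q<2$. Finally, since $\abs{\nabla w}=\abs{G}/\abs{z}$ away from the origin and $w$ is smooth near the origin, this gives $\nabla w\in L^q(\dd)$ for all $q<2$; applying this to $w=u$ and to $w=v$ and recalling that $h$ is bounded, one obtains $h\in\W^{1,q}(\dd,\cc)$ for all $q\in[1,2)$.

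The genuine content — and hence the step I expect to be the main obstacle — lies entirely in the middle paragraph: the weak-type, hence $H^p$ for $p<1$, bound for the conjugate of a finite measure, and the Hardy--Littlewood inclusion $H^p\subset L^{2p}(\dd,\dtext A)$. Everything else (the conjugate-function identities, interior regularity of harmonic functions, the $BV$ reduction) is bookkeeping. Two remarks worth making. First, convexity of $\Y$ is used only to guarantee that $\partial\Y$ is rectifiable, so the same argument in fact proves the statement for every Jordan domain $\Y$ with rectifiable boundary. Second, the restriction $p<2$ is sharp, and the mechanism is transparent: a boundary datum with a genuine jump at a point $\xi_0\in\partial\dd$ produces a gradient of order $\abs{z-\xi_0}^{-1}$ near $\xi_0$, which is $p$-integrable precisely when $p<2$.
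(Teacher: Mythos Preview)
Your argument is correct. The paper does not supply its own proof of this proposition; it is quoted as a result of Verchota~\cite{Ve}. So there is no ``paper proof'' to match against, but it is worth comparing your route with the technique the paper uses for the closely related Theorem~\ref{harmonicThmV2}. Specialised to $\xx=\dd$ (so $g=\id$), that theorem yields exactly the statement under discussion with $\Y=\dd$, and the same computation works verbatim for any rectifiable target: differentiate the Poisson integral, integrate by parts to produce $\psi'$, apply Minkowski's integral inequality, and use only the elementary fact that $\sup_{\omega\in\partial\dd}\int_\dd\abs{\omega-z}^{-p}\,dz<\infty$ for $p<2$. This gives the bound $\norm{\nabla h}_{L^p(\dd)}\le C_p\,\mathrm{Var}(\psi)=C_p\,\mathrm{length}(\partial\Y)$ directly.

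Your approach is different in spirit: you pass to the analytic completion $F$, observe that $zF'$ has real part $P[\mu]$ with $\mu=d\psi_j$, invoke Kolmogorov's weak-type bound to place $zF'$ in $H^p$ for every $p<1$, and then use the Hardy--Littlewood inclusion $H^p\hookrightarrow L^q(\dd,\dtext A)$ for $q<2p$. Both arguments ultimately exploit the same feature --- that $\psi\in BV$ with total variation equal to the perimeter of $\Y$ --- but the paper's route is more elementary (no Hardy-space machinery), while yours makes the mechanism of the $p<2$ threshold more transparent and connects it to classical $H^p$ theory. Your closing observation that convexity enters only through rectifiability is also correct for this proposition; convexity is needed elsewhere in the paper (via Lemma~\ref{lem:RKC}) to guarantee that the harmonic extension is itself a homeomorphism, which is not asserted here.
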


This result was further generalized in~\cite{IMS} and~\cite{Kal}. The case $p>2$ will be discussed in Section~\ref{sec:liptriv}. Our main purpose   is to provide a general study of Question~\ref{existQuestion} in the case when $1 \leq p < 2$.

Considering now the endpoint case $p = \infty$, we find that Question~\ref{existQuestion} is equivalent to the question of finding a homeomorphic Lipschitz map extending the given boundary data $\varphi$. In this case the Kirszbraun extension theorem~\cite{Ki} shows that a boundary map $\varphi \colon \partial \mathbb D \onto \partial \Y$ admits a Lipschitz extension if and only if $\varphi$ is a Lipschitz map itself. In the case when $\xx$ is the unit disk, a positive answer to Question~\ref{Q2} is then given by the following recent result by   Kovalev~\cite{Kovalev}.

\begin{theorem}\label{thm:lip}\textnormal{{\bf ($p=\infty$)}} Let $\varphi \colon \partial \mathbb D \to \cc$ be a Lipschitz embedding. Then $\varphi$ admits a homeomorphic Lipschitz extension to the whole plane $\cc$.
\end{theorem}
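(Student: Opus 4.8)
The plan is to extend $\varphi$ separately over the two complementary Jordan domains of $\Gamma:=\varphi(\partial\mathbb D)$ and to glue the pieces along $\Gamma$. After a scaling we may assume $\Lip(\varphi)=1$; write $\Omega$ for the bounded and $\Omega^{*}$ for the unbounded complementary domain of the (rectifiable) Jordan curve $\Gamma$. It suffices to produce a Lipschitz homeomorphism $h_{\mathrm i}\colon\overline{\mathbb D}\onto\overline\Omega$ with $h_{\mathrm i}|_{\partial\mathbb D}=\varphi$; the exterior part is of the same nature, since truncating $\Omega^{*}$ by a large circle reduces the exterior problem to extending $\varphi$ — together with the identity on the outer circle — over an annulus, where all the work again happens near $\Gamma$ and the identity near $\infty$ makes the assembled map globally Lipschitz once it is Lipschitz on the compact annular region. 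Concatenating $h_{\mathrm i}$ with this exterior extension along $\Gamma$ then yields a homeomorphism $h\colon\mathbb C\onto\mathbb C$ which is Lipschitz on each closed piece, matches continuously on $\partial\mathbb D$, hence is globally Lipschitz, with $h|_{\partial\mathbb D}=\varphi$.

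Everything thus reduces to a single statement: a $1$-Lipschitz embedding $\varphi\colon\partial\mathbb D\into\mathbb C$ bounding a Jordan domain $\Omega$ admits a Lipschitz homeomorphic extension $\overline{\mathbb D}\onto\overline\Omega$. The structural point that makes this true even when $\Omega$ is far from being a Lipschitz domain is that we only ask for a \emph{Lipschitz}, not a bi-Lipschitz, extension: an affine map from one triangle onto another has Lipschitz norm comparable to the ratio of the image's diameter to the source's minimal width, so \emph{compressing} a fat source cell onto a thin target cell is free; and since $\varphi$ is $1$-Lipschitz, an arc of $\partial\mathbb D$ of length $\ell$ always has image of diameter at most $\ell$, so one is never forced to stretch. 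Concretely, I would build the extension from compatible Whitney-type decompositions: decompose $\overline{\mathbb D}$ into its standard Whitney squares together with a boundary collar whose cells are indexed by arcs of $\partial\mathbb D$, decompose $\overline\Omega$ into the Whitney squares of $\Omega$, set up a combinatorial bijection between the two decompositions that respects adjacency and is compatible with the boundary correspondence $\varphi$, and realize it by an affine or bilinear map on each cell; the $1$-Lipschitz control on $\varphi$ then bounds the size of each target cell by a constant times that of its source cell, and hence bounds the Lipschitz norm of the whole map.

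The technical heart — and where I expect the real difficulty — is to arrange this combinatorial bijection so that it exists near points where $\Gamma$ has cusps or thin necks (there the Whitney structure of $\Omega$ is genuinely intricate), so that the cell maps agree along shared edges, and so that the resulting Lipschitz map is a homeomorphism rather than merely a surjection. A convenient soft approach is to approximate $\varphi$ uniformly by polygonal embeddings $\varphi_n$ — the polygonal case being elementary, and the same compression principle giving homeomorphic Lipschitz extensions $h_n$ with uniformly bounded norms — and to pass to a limit $h=\lim h_n$; then $h$ is a monotone Lipschitz map of $\overline{\mathbb D}$ onto $\overline\Omega$ (the degree of $h_n$ about any point off $\Gamma$ equals the winding number of $\varphi$, which forces $h(\overline{\mathbb D})=\overline\Omega$), but it may fail to be injective by collapsing arcs into the tips of cusps. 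To repair this one spreads $h$ out again across its degeneracy set into the thin cusp regions of $\Omega$; because those regions are thin this costs nothing in the Lipschitz norm while restoring injectivity. Making this repair precise — or, equivalently, designing the Whitney matching so that no collapsing occurs in the first place — is the crux of the whole argument; the remaining ingredients (the Jordan–Schoenflies theorem, Carath\'eodory's extension theorem, Arzel\`a–Ascoli, and planar degree theory) are classical.
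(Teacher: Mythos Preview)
The paper does not prove this theorem; it is quoted as a recent result of Kovalev and cited to~\cite{Kovalev}. There is therefore no proof in the present paper to compare your proposal against.

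Regarding your proposal on its own merits: it is a strategy outline rather than a proof, and you say so yourself --- you explicitly flag that ``making this repair precise\ldots is the crux of the whole argument'' and leave that crux open. That is the genuine gap. The limit-and-repair scheme you sketch (approximate by polygons, extract a monotone Lipschitz limit, then undo the collapsing) is not routine: the degeneracy set of the limit map need not consist of isolated arcs but can a priori be a complicated continuum; the cusp regions of $\Omega$ into which you would ``spread'' the map may accumulate; and above all, maintaining a \emph{uniform} Lipschitz bound through the repair is exactly what is hard, since the repair must push source points apart while moving their images only within a target region of arbitrarily small width. Your alternative Whitney-matching idea is closer to a direct construction, but as you also acknowledge, producing an adjacency-preserving bijection of Whitney cells compatible with $\varphi$ near thin necks of $\Omega$ is the entire problem, and nothing in the proposal indicates how to carry it out. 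The surrounding reductions (interior versus exterior, gluing along $\Gamma$, Arzel\`a--Ascoli, degree) are correct and standard, but they are the easy part.
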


Let us return to the case of the Dirichlet energy, see~\eqref{pt:1}-\eqref{pt:3} above.  The equivalence of a $\W^{1,2}$-Sobolev extension and a $\W^{1,2}$-Sobolev homeomorphic extension for non-Lipschitz targets is a more subtle question. In this perspective,  a slightly more general class of domains is the class of inner chordarc domains studied in Geometric Function Theory~\cite{HS, Po, Tu, Va1, Va2}. By definition~\cite{Va1}, a rectifiable Jordan domain $\Y$ is {\it inner chordarc} if there exists a constant  $C$ such that for every pairs of points $y_1, y_2 \in \partial \Y$ there exists an open Jordan arc $\gamma \subset \Y$ with endpoints at $y_1$ and $y_2$ such that the shortest connection from $y_1$ to $y_2$ along $\partial \Y$ has length at most $C\cdot\textnormal{length} (\gamma )$. For example, an inner chordarc domain allows for inward cusps as oppose to Lipschitz domains. According to a result of  V\"ais\"al\"a~\cite{Va1} the inner chordarc condition is equivalent with the requirement that there exists a homeomorphism $\Psi \colon \overline{\Y} \onto \overline{\mathbb D}$, which is $\mathscr C^1$-diffeomorphic in $\Y$, such that the norms of both the gradient matrices $D\Psi$ and $(D\Psi)^{-1}$ are bounded from above.

Surprisingly, the following example shows that, unlike for Lipschitz targets, the answer to Question~\ref{Q2} for $p=2$ is in general negative when the target is only inner chordarc.
 
\begin{example}\label{ex:nodirichglet}
There exists an inner chordarc domain $\Y$ and a homeomorphism $\varphi \colon \partial \mathbb D \onto \partial \Y$ satisfying the Douglas condition~\eqref{eq:douglas} which does not admit a homeomorphic extension $h \colon \overline {\mathbb D} \onto \overline{\Y}$ in $\W^{1,2} (\mathbb D , \Y)$.
\end{example}
In~\cite{AIMO} it was, as a part of studies of mappings with smallest mean distortion,  proved that for $\mathscr C^1$-smooth $\Y$ the  Douglas condition~\eqref{eq:douglas} can be equivalently formulated in terms of  the inverse mapping $\varphi^{-1} \colon \partial \Y \onto \partial \mathbb D$,
\begin{equation}\label{eq:invdouglas}
\int_{\partial \mathbb Y} \int_{\partial \mathbb Y} \big| \log \abs{\varphi^{-1} (\xi) - \varphi^{-1} (\eta) }  \big| \, \abs{\dtext \xi } \,  \abs{\dtext \eta }  < \infty \, .
\end{equation}
It was recently shown that for inner chordarc targets this condition is necessary and sufficient for $\varphi$ to admit a $\W^{1,2}$-homeomorphic extension, see~\cite{KWX}. We extend this result both to cover rectifiable targets and to give a global homeomorphic extension as follows.
\begin{theorem}\label{thm:dirichlet} \textnormal{{\bf ($p=2$)}}
Let $\X$ and $\Y$ be Jordan domains,  $\partial \Y$ being rectifiable. Every $\varphi \colon \partial \X \onto \partial \Y$ satisfying~\eqref{eq:invdouglas} admits a homeomorphic extension $h \colon \cc \to \cc$ of Sobolev class $\W^{1,2}_{loc} (\C, \C)$.
\end{theorem}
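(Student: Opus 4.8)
I would produce $h$ as the union of an interior extension $h_-\colon \overline{\X}\onto\overline{\Y}$ of $\varphi$ in $\W^{1,2}(\X,\C)$ and an exterior extension $h_+\colon \C\setminus\X\onto\C\setminus\Y$ of $\varphi$ in $\W^{1,2}_{\loc}(\C\setminus\overline{\X},\C)$, glued along $\partial\X$. The exterior piece reduces to an interior‑type problem by a M\"obius inversion: fixing $z_0\in\X$, $w_0\in\Y$ and setting $\iota(z)=1/(z-z_0)$, $\kappa(w)=1/(w-w_0)$, the sets $\X^\dagger:=\iota(\C\setminus\overline{\X})\cup\{0\}$ and $\Y^\dagger:=\kappa(\C\setminus\overline{\Y})\cup\{0\}$ are bounded Jordan domains, $\partial\Y^\dagger$ is rectifiable (inversion is bi‑Lipschitz off its pole), and the induced $\varphi^\dagger\colon\partial\X^\dagger\onto\partial\Y^\dagger$ again satisfies \eqref{eq:invdouglas} — because, away from the poles, both the arc length on $\partial\Y^\dagger$ and the distances between preimages change only by bounded factors. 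Granting the interior result for $(\X^\dagger,\Y^\dagger,\varphi^\dagger)$, post‑composing its extension with a bi‑Lipschitz self‑map of $\overline{\Y^\dagger}$ — identity near $\partial\Y^\dagger$ — that returns the image of $0$ to $0$, and inverting, yields $h_+$, proper at infinity.

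\textbf{Reduction of the interior problem.} Let $\Phi\colon\overline{\mathbb D}\onto\overline{\X}$ and $F\colon\overline{\mathbb D}\onto\overline{\Y}$ be conformal (Riemann + Carath\'eodory). Since $\partial\Y$ is rectifiable, F.\ and M.\ Riesz theory gives $F'\in H^1(\mathbb D)$ and absolute continuity of both $F|_{\partial\mathbb D}$ and $F^{-1}|_{\partial\Y}$; moreover $F\in\W^{1,2}(\mathbb D,\C)$ automatically, as $\int_{\mathbb D}\abs{F'}^2=\abs{\Y}<\infty$. Put $\nu:=F^{-1}\circ\varphi\circ\Phi|_{\partial\mathbb D}\colon\partial\mathbb D\onto\partial\mathbb D$. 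For \emph{any} homeomorphic self‑extension $\tilde\nu\colon\overline{\mathbb D}\onto\overline{\mathbb D}$ of $\nu$, the map $h_-:=F\circ\tilde\nu\circ\Phi^{-1}$ is a homeomorphic extension of $\varphi$, and — using that $D\Phi^{-1}$ and $DF$ are conformal together with the change of variables $w=\Phi^{-1}(x)$ — one obtains
\begin{equation*}
\int_{\X}\abs{Dh_-}^2\,\dtext x=\int_{\mathbb D}\abs{F'(\tilde\nu(w))}^2\,\abs{D\tilde\nu(w)}^2\,\dtext w .
\end{equation*}
This is exactly why $\partial\X$ need not be assumed rectifiable: its (possibly wild) conformal factor cancels. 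So it suffices to produce a homeomorphic self‑extension $\tilde\nu$ of $\nu$ with this weighted Dirichlet integral finite.

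\textbf{The weighted extension; energy estimate.} Substituting $u=F^{-1}(\xi)$, $v=F^{-1}(\eta)$ in \eqref{eq:invdouglas} for $\varphi$ and using $\abs{\dtext\xi}=\abs{F'(u)}\,\abs{\dtext u}$ turns the hypothesis into the weighted Douglas‑type bound
\begin{equation*}
\int_{\partial\mathbb D}\int_{\partial\mathbb D}\big|\log\abs{\Phi(\nu^{-1}(u))-\Phi(\nu^{-1}(v))}\big|\;\abs{F'(u)}\,\abs{F'(v)}\;\abs{\dtext u}\,\abs{\dtext v}<\infty ,
\end{equation*}
which packages the admissible trade‑off into a single inequality. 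One then builds $\tilde\nu$ by an explicit interpolation of $\nu$ — a Beurling--Ahlfors/radial‑type extension, or the construction behind Theorem~\ref{thm:main} — so that (i) $\tilde\nu$ is a homeomorphism of $\overline{\mathbb D}$ (here convexity of the target disk, via a Rad\'o--Kneser--Choquet‑type argument, is convenient), and (ii) the stretching of $\tilde\nu$ near $\partial\mathbb D$ is dominated, against the decay of $\abs{F'}$ recorded above, so that $\int_{\mathbb D}\abs{F'(\tilde\nu)}^2\abs{D\tilde\nu}^2<\infty$. Step (ii) is the crux: for inner chordarc or $\mathscr C^1$ targets $\abs{F'}$ stays bounded away from $0$ and one recovers \cite{KWX}, but for a merely rectifiable $\Y$ the factor $\abs{F'}$ may degenerate near $\partial\mathbb D$ and $\tilde\nu$ must be chosen to exploit precisely that degeneration. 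I expect this estimate to be the main obstacle.

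\textbf{Assembly.} Setting $h:=h_-$ on $\overline{\X}$ and $h:=h_+$ on $\C\setminus\X$ gives a continuous bijection $\C\onto\C$ (the pieces agree with $\varphi$ on $\partial\X$ and carry the three Jordan pieces on either side bijectively), hence a homeomorphism by invariance of domain; it is proper by the normalization in the first step. Away from $\partial\X$, and near the two isolated points produced by the inversions (which are removable for $\W^{1,2}$ in the plane), it lies in $\W^{1,2}_{\loc}$. The one delicate point is removability of the possibly non‑rectifiable seam $\partial\X$; here I would use that $h$ carries $\partial\X$ onto the rectifiable, hence Lebesgue‑null, curve $\partial\Y$ — so that the inverse of a planar $\W^{1,2}$‑homeomorphism being $\W^{1,1}_{\loc}$ gives $h^{-1}\in\W^{1,1}_{\loc}(\C)$ across $\partial\Y$, and $h$ is of bounded variation on almost every line — and then invoke a removability lemma for Sobolev homeomorphisms to conclude $h\in\W^{1,2}_{\loc}(\C,\C)$.
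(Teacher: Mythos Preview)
Your architecture (interior extension plus exterior extension obtained by inversion, glued along $\partial\X$) matches the paper's. The gap is in the interior step: you reduce to finding a self‑homeomorphism $\tilde\nu$ of $\overline{\mathbb D}$ with
\[
\int_{\mathbb D}\abs{F'(\tilde\nu(w))}^2\,\abs{D\tilde\nu(w)}^2\,\dtext w<\infty,
\]
but you do not prove this; you write ``I expect this estimate to be the main obstacle'' and leave it there. For merely rectifiable $\partial\Y$ the conformal factor $\abs{F'}$ can vanish on a set of positive arc length on $\partial\mathbb D$, so a generic Beurling--Ahlfors or harmonic extension of $\nu$ will not interact with that degeneration in any controlled way. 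Nothing you wrote (the displayed weighted log‑integral, the reference to Theorem~\ref{thm:main}) points to a mechanism that would force finiteness. As it stands, the proposal is a reduction to an unproved inequality, not a proof.

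The paper avoids this difficulty entirely by replacing your conformal map $F$ with a \emph{Lipschitz} straightening of $\Y$. Since $\partial\Y$ is rectifiable, its arc‑length parametrization $\gamma\colon\partial\mathbb D\onto\partial\Y$ is a Lipschitz embedding, and Kovalev's Theorem~\ref{thm:lip} extends it to a global Lipschitz homeomorphism $G\colon\C\onto\C$. Then $h_-:=G\circ H_1$, where $H_1\colon\overline{\X}\onto\overline{\mathbb D}$ is the harmonic (RKC) extension of $\gamma^{-1}\circ\varphi$, is a homeomorphic extension of $\varphi$, and $\int_\X\abs{Dh_-}^2\le(\Lip G)^2\int_\X\abs{DH_1}^2$. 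So the whole question collapses to whether $H_1\in\W^{1,2}(\X)$, i.e.\ whether $\gamma^{-1}\circ\varphi$ satisfies the Douglas condition. The change of variables $\xi=\gamma(z)$ in~\eqref{eq:invdouglas} (constant speed, so $\abs{\dtext\xi}=c\abs{\dtext z}$) turns the hypothesis into the log‑condition on $f:=\varphi^{-1}\circ\gamma\colon\partial\mathbb D\to\partial\X$, and the Astala--Iwaniec--Martin--Onninen equivalence~\cite{AIMO} converts that into the Douglas condition for $f^{-1}=\gamma^{-1}\circ\varphi$. No weighted estimate is needed; the ``weight'' has become a bounded Lipschitz constant. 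Your removability worries about the seam $\partial\X$ are a separate issue and are not specific to your approach.
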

Without  the rectifiability of  $\partial \yy$, Question~\ref{Q2} will in general admit a negative answer for all $p \leq 2$. This follows from the following  example of Zhang~\cite{Zh}.
\begin{example}\label{ex:yi}
 There exists a Jordan domain $\Y$ and a homeomorphism $\varphi \colon \partial \mathbb D \onto \partial \Y$ which admits a $\W^{1,2}$-Sobolev extension to $\dd$ but does not admit any homeomorphic extension to $\dd$  in the class $\W^{1,1}(\dd, \mathbb C)$.
\end{example}

We now return to the case when $1 \leq p < 2$. In this case it is natural to ask under which conditions on the domains $\xx$ and $\yy$ does any homeomorphism $\varphi: \partial \xx \onto \partial \yy$ admit a $\W^{1,p}$-Sobolev homeomorphic extension.  Proposition~\ref{Verchota} already implies that this is the case for $\xx = \dd$ and $\yy$ convex. Example~\ref{ex:yi}, however, will imply that this result does not hold in general for nonrectifiable targets $\yy$. A  general characterization is provided by the following two theorems.

\begin{theorem}\label{thm:main} \textnormal{{\bf ($1\le p<2$)}}
Let $\X$ and $\Y$ be Jordan domains in the plane with $\partial \Y$ rectifiable. Let $\varphi \colon \partial \X \onto \partial \Y$ be a given homeomorphism.  Then there is a homeomorphic extension $h \colon \overline{\X} \onto \overline{\Y}$ such that
\begin{enumerate}
\item $h \in \W^{1,1} (\X, \C)$, provided $\partial\X$ is rectifiable, and 
\item $h\in \W^{1,p} (\X, \C)$ for $1<p<2$, provided  $\xx$ has  \emph{$s$-hyperbolic growth} with $s>p-1$. 
\end{enumerate}
\end{theorem}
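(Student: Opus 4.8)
The plan is to reduce, by a conformal transplant, to a single weighted Sobolev estimate on the unit disc, and then to build the extension by an adaptive multi‑scale construction. Let $\Phi\colon\dd\onto\X$ and $\Psi\colon\dd\onto\Y$ be Riemann maps; since $\partial\X,\partial\Y$ are Jordan curves they extend to homeomorphisms of the closures. Rectifiability of $\partial\Y$ is equivalent to $\Psi'$ belonging to the Hardy space $H^{1}(\dd)$; in particular $\abs{\Psi'}\in L^{1}(\dd)$, the curves $\Psi(\sigma\,\partial\dd)$ have lengths increasing to $\mathrm{length}(\partial\Y)<\infty$ as $\sigma\nearrow1$, and $\Psi'$ lies in every Bergman space $A^{q}(\dd)$ with $q<2$. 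Put $\psi\deff\Psi^{-1}\circ\varphi\circ\Phi\colon\partial\dd\onto\partial\dd$. It suffices to produce a homeomorphic extension $g\colon\overline{\dd}\onto\overline{\dd}$ of $\psi$, a diffeomorphism inside $\dd$, with
\[
\int_{\dd}\abs{\Psi'(g(w))}^{p}\,\abs{Dg(w)}^{p}\,\abs{\Phi'(w)}^{2-p}\,\dtext w<\infty ,
\]
for then $h\deff\Psi\circ g\circ\Phi^{-1}\colon\overline{\X}\onto\overline{\Y}$ is a homeomorphic extension of $\varphi$, a diffeomorphism in $\X$, and the change of variables $x=\Phi(w)$, together with $\abs{D\Phi}=\abs{\Phi'}$, $\abs{D\Psi}=\abs{\Psi'}$, $J_{\Phi}=\abs{\Phi'}^{2}$, transforms $\int_{\X}\abs{Dh}^{p}\,\dtext x$ into (at most) the displayed integral.

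Next I would construct $g$ scale by scale, and here one must be careful: the radial extension of $\psi$ is merely of bounded variation when $\psi$ is singular, and the harmonic extension of a circle homeomorphism need not lie in $\W^{1,1}(\dd,\C)$, so the singular behaviour of $\psi$ has to be smeared out over infinitely many scales. The construction is based on a decomposition of $\dd$ by dyadic annuli (equivalently, a Whitney decomposition, chosen so that $\abs{\Phi'}$ has bounded oscillation on each piece), refined in the angular direction as one approaches $\partial\dd$ and \emph{adapted to $\psi$} where $\psi$ is concentrated; on each piece one interpolates between regularized --- say piecewise affine --- circle homeomorphisms $\psi_{k}$ converging to $\psi$, which are absolutely continuous with $\int\abs{\psi_{k}'}=2\pi$ and hence contribute only a bounded amount of angular energy per scale, and between smooth radii $\sigma_{k}\nearrow1$ chosen so that $\Psi(\sigma_{k}\,\partial\dd)$ exhausts $\partial\Y$. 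Gluing the pieces, arranging $\abs{g(w)}\asymp\abs{w}$ (so that $\abs{\Psi'(g(w))}\lesssim(1-\abs{w})^{-1}\norm{\Psi'}_{H^{1}}$ on the corresponding piece), and smoothing the interfaces produces a diffeomorphism of $\dd$; monotonicity of $k\mapsto\sigma_{k}$ and of the radial interpolations forces injectivity, so $g$ extends to a homeomorphism of $\overline{\dd}$ realising $\psi$ on $\partial\dd$.

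The third step is to bound the weighted energy over each piece of the decomposition and sum. For $p=1$ the domain weight is $\abs{\Phi'}$; rectifiability of $\partial\X$ gives $\Phi'\in H^{1}(\dd)$, hence $\int_{\dd}\abs{\Phi'}\,\dtext w<\infty$ and a geometrically decaying bound $\int\abs{\Phi'}\lesssim 2^{-k}\,\mathrm{length}(\partial\X)$ over each dyadic annulus at scale $2^{-k}$, which combines with the bounded‑per‑scale angular estimate and the finiteness of $\mathrm{length}(\partial\Y)$ to give a convergent series. For $1<p<2$ the weight is $\abs{\Phi'}^{2-p}$ with $0<2-p<1$, and the per‑scale estimate is genuinely worse, since $\int\abs{\psi_{k}'}^{p}$ is no longer controlled by $\int\abs{\psi_{k}'}=2\pi$ and one must also pay for the growth of $\abs{\Phi'}$ near $\partial\dd$; the hypothesis that $\X$ has $s$‑hyperbolic growth with $s>p-1$ is precisely the quantitative control on this growth for which the resulting series still converges (and, by the examples in the introduction, it is sharp). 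Summing the pieces then gives $h\in\W^{1,1}(\X,\C)$ in case (1) and $h\in\W^{1,p}(\X,\C)$ in case (2).

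The main obstacle is the adaptive per‑scale construction together with its bookkeeping. One must produce, on each piece of the decomposition, a homeomorphism that simultaneously realises the prescribed regularized boundary parametrisations on its two ``radial'' faces, is a diffeomorphism after smoothing, keeps the modulus comparable to that of $w$, and has $\W^{1,p}$ energy quantitatively controlled in terms of the combinatorics of the decomposition and of the weight $\abs{\Phi'}^{2-p}$ --- and then one must calibrate the free parameters ($\sigma_{k}$, the refinement of the partitions, the radial profiles) so that the total over all scales is finite. The delicate point is that $\abs{\psi_{k}'}$ and $\abs{\Phi'}$ may concentrate on the same thin angular sets, so the interpolation must be arranged to carry the angular twist of $\psi$ away from where the conformal distortion of $\Phi$ is largest; it is this interaction, and not either phenomenon in isolation, that is responsible for the hypotheses --- rectifiability of $\partial\X$ when $p=1$, and the sharp exponent $s>p-1$ when $1<p<2$.
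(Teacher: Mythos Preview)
Your proposal is a plan rather than a proof: you yourself flag ``the main obstacle'' as the adaptive per-scale construction and its bookkeeping, and you do not carry it out. Nothing in the sketch explains concretely how to route the angular twist of $\psi$ away from the concentration of $\abs{\Phi'}$, nor how the pieces sum. In fact the crude ingredients you list do not close even for $p=1$: the pointwise bound $\abs{\Psi'(g(w))}\lesssim(1-\abs{w})^{-1}\norm{\Psi'}_{H^1}$, the ``bounded per scale'' angular energy $\int\abs{\psi_k'}=2\pi$, and the $H^1$ estimate $\int_{\text{annulus}_k}\abs{\Phi'}\lesssim 2^{-k}$ combine to a contribution that is merely bounded on each dyadic annulus, hence divergent when summed over all scales. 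Getting a summable series would require exploiting cancellation or integrability of $\Psi'$ beyond the pointwise growth bound, and your outline does not indicate how.

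The paper's argument sidesteps the two-weight difficulty entirely, via two ingredients you do not invoke. First, since $\partial\Y$ is rectifiable its arclength parametrisation $\gamma\colon\partial\dd\to\partial\Y$ is a Lipschitz embedding, and Kovalev's theorem extends it to a Lipschitz homeomorphism $G\colon\overline{\dd}\to\overline{\Y}$; this absorbs the target weight $\abs{\Psi'}$ into a Lipschitz constant. Second, for the induced boundary map $\gamma^{-1}\circ\varphi\colon\partial\X\to\partial\dd$ the Rad\'o--Kneser--Choquet theorem hands you a canonical homeomorphic extension --- the harmonic one, $h_0\colon\overline{\X}\to\overline{\dd}$ --- so no hand-built multi-scale map is needed. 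Setting $h=G\circ h_0$, the whole question reduces to showing $h_0\in\W^{1,p}(\X,\C)$, which after transplanting by $\Phi$ is a \emph{one}-weight problem: differentiating the Poisson formula and applying Minkowski's inequality shows it suffices that $\sup_{\omega\in\partial\dd}\int_{\dd}\abs{\Phi'(z)}^{2-p}\abs{\omega-z}^{-p}\,\dtext z<\infty$. For $p=1$ this is just the fact that $\abs{1-z}^{-1}\,\dtext z$ is a Carleson measure together with $\Phi'\in H^1$; for $1<p<2$ the $s$-hyperbolic growth hypothesis, via Koebe's theorem and conformal invariance of hyperbolic distance, yields the pointwise bound $\abs{\Phi'(z)}\lesssim(1-\abs{z})^{-1}\bigl(\log\tfrac{1}{1-\abs{z}}\bigr)^{-1/(1-s)}$, from which the integral condition follows by direct computation when $s>p-1$.
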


\begin{definition}\label{def:shyper} Let $\xx$ be a domain in the plane. Choose and fix a point  $x_0 \in \xx$. We say that $\xx$ has  \emph{$s$-hyperbolic growth},  $s \in (0,1)$, if the following condition holds
\begin{equation}\label{cuspCondition}
h_{\xx}(x_0,x) \leq C \left(\frac{\dist(x_0,\partial \xx)}{\dist(x,\partial \xx)}\right)^{1-s} \qquad \text{ for all } x \in \xx \, . 
\end{equation}
Here $h_{\xx}$ stands for the quasihyperbolic metric on $\xx$ and $\dist(x,\partial\xx)$ is the Euclidean distance of $x$ to the boundary. The constant $C$ is allowed to depend on everything except the point $x$.
\end{definition}
It is easily verified that this definition does not depend on the choice of $x_0$. Recall that if $\Omega$ is a domain, the quasihyperbolic metric $h_{\Omega} $ is defined by~\cite{GP}
\begin{equation}
h_{\Omega} (x_1, x_2) = \inf_{\gamma \in \Gamma} \int_\gamma \frac{1}{\dist(x,\partial\xx)}\,  \abs{\dtext x} \, , \qquad x_1 , x_2 \in \Omega 
\end{equation}
where $\Gamma$ is the family of all rectifiable curves in $\Omega$ joining $x_1$ and $x_2$.

Definition~\ref{def:shyper} is motivated by the following example. For $s \in (0,1)$ we consider the Jordan domain $\X_s$ whose boundary is given by the curve
\[\Gamma_s = \{(x,y) \in \cc : -1 \leq x \leq 1,\, y = |x|^s\} \cup \{ z \in \cc : |z-i| = 1,\, \im(z) \geq 1\}.\]

\begin{figure}[h]
\includegraphics[scale=0.4]{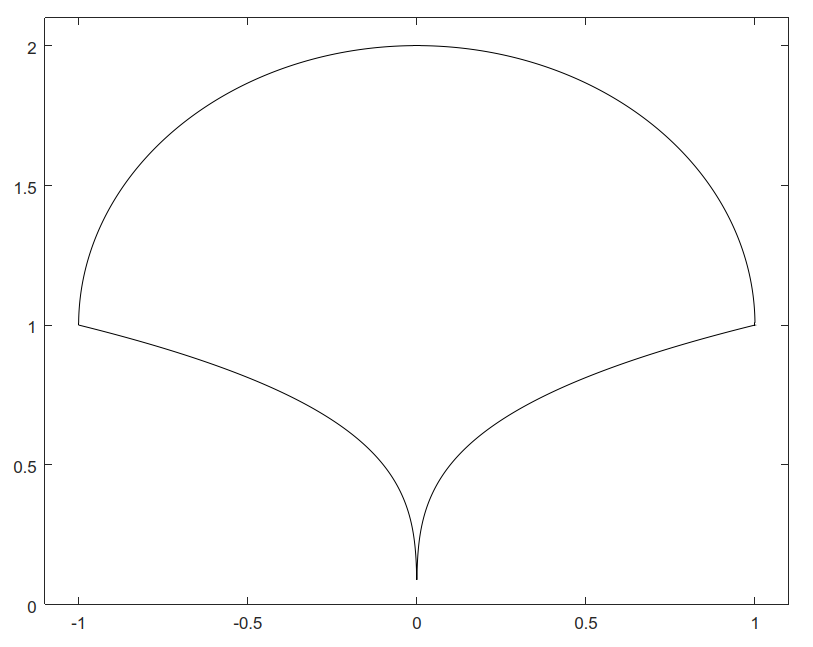}
\caption{The Jordan domain $\xx_s$.}
\label{cuspFig}
\end{figure}

In particular, the boundary of $\X_s$ is locally Lipschitz apart from the origin. Near to  the origin the boundary of $\X_s$ behaves like the graph of the function $|x|^s$. Then one can verify that the boundary of $\X_s$ has $t$-hyperbolic growth for every $t \geq s$. Note that smaller the number $s$ sharper the cusp is.

The results of Theorem~\ref{thm:main} are sharp, as described by the following result.
\begin{theorem}\label{counterExampleThm1} \quad
\begin{enumerate}
\item{There exists a Jordan domain $\xx$ with nonrectifiable boundary and a homeomorphism $\varphi : \partial\xx \to \partial\dd$ such that $\varphi$ does not admit a continuous extension to $\xx$ in the Sobolev class $\W^{1,1}(\xx, \mathbb C)$.}
\item{For every $p \in (1,2)$ there exists a Jordan domain $\xx$ which has $s$-hyperbolic growth, with $p-1 = s$, and a homeomorphism $\varphi : \partial\xx \to \partial\dd$ such that $\varphi$ does not admit a continuous extension to $\xx$ in the Sobolev class $\W^{1,p}(\xx, \mathbb C)$.}
\end{enumerate}
\end{theorem}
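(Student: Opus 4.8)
For each assertion the plan is to exhibit a Jordan domain $\xx$ sitting exactly on the boundary of the hypothesis of Theorem~\ref{thm:main}, together with a boundary homeomorphism $\varphi\colon\partial\xx\onto\partial\dd$ that, near the critical spot of $\partial\xx$, ``unfolds'' a macroscopic sub‑arc of $\partial\dd$, and then to show that \emph{no} continuous $\W^{1,p}$ map extends it. In case (1) $\xx$ is built by iterating a fixed ``bottleneck'' building block at a geometric sequence of scales accumulating at one boundary point $x_0$, so that the boundary lengths contributed at the various scales sum to $\infty$ (nonrectifiability) while the areas stay summable (so $\xx$ is bounded). In case (2) the same scheme is used, but the thin parts of the blocks are forced to shrink at precisely the polynomial rate $\dist\sim(\mathrm{depth})^{1/(p-1)}$ — the sharpest opening still compatible with $s$‑hyperbolic growth for $s=p-1$; anything thinner destroys even this weakest growth condition, anything thicker lands inside the scope of Theorem~\ref{thm:main}(2). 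The crucial point, and the one I expect to be the main obstacle, is this calibration: one has to choose the block dimensions at each scale so that (i) the resulting $\xx$ is a genuine Jordan domain, (ii) $h_{\xx}(x_0,\cdot)$ computed along the geodesic into the blocks obeys \eqref{cuspCondition} with $s=p-1$ (with equality in the exponent, as the construction is extremal), and (iii) the lower bound produced below does not decay summably — and (iii) is exactly the place where the threshold value $s=p-1$ is used.

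\textbf{Reduction.} Suppose $h\colon\overline\xx\to\C$ is continuous, $h|_{\partial\xx}=\varphi$, and $h\in\W^{1,p}(\xx,\C)$. First I normalise: composing $h$ with the $1$‑Lipschitz nearest‑point retraction $\C\to\overline\dd$ does not increase $\int_\xx|Dh|^p$ and leaves $h|_{\partial\xx}$ unchanged, so we may assume $h(\overline\xx)\subseteq\overline\dd$. A winding‑number argument then forces $h$ to be onto $\dd$: if some $w_0\in\dd$ were omitted, $h$ would map the contractible set $\overline\xx$ into $\overline\dd\setminus\{w_0\}$ while restricting on $\partial\xx$ to the degree $\pm1$ map $\varphi$, which is impossible. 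Thus $h$ is a continuous surjection of $\overline\xx$ onto $\overline\dd$ agreeing with $\varphi$ on $\partial\xx$.

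\textbf{Obstruction.} Now I would slice $\xx$ by the lines in a fixed direction: with $e$ a unit vector, $|Dh|\ge|\partial_e h|$, and Fubini together with Hölder gives
\[
\Bigl(\int_\xx|Dh|^p\Bigr)^{1/p}\ \ge\ |\xx|^{1/p-1}\int_{\R}\ \sum_{I}\ \bigl|\varphi(\partial^- I)-\varphi(\partial^+ I)\bigr|\ \dtext t ,
\]
where for each parameter $t$ the inner sum runs over the segments $I$ of the line $\ell_t\cap\xx$, with endpoints $\partial^\pm I\in\partial\xx$, and one also has the companion bound via the coarea inequality for $z\mapsto|h(z)|$ in terms of $\int_0^1\mathcal H^1\bigl(h^{-1}\{|w|=\rho\}\bigr)\,\dtext\rho$. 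The whole design of $\xx$ and $\varphi$ is made so that the right‑hand side diverges: the lines near the critical direction cross the thin necks of \emph{all} the blocks between $\partial\xx$ and $x_0$, the number of necks crossed tends to $\infty$ as $t$ approaches the critical value, and — because the boundary arcs bounding these necks appear in \emph{nested} (not consecutive) order along $\partial\xx$ — the choice of $\varphi$ can keep $\sum_I|\varphi(\partial^-I)-\varphi(\partial^+I)|$ comparable to the number of necks crossed rather than letting the oscillations cancel. Integrating this in $t$ over the blocks' scales yields a divergent series exactly at the threshold, hence $\int_\xx|Dh|^p=\infty$, a contradiction; for item (1), where $p=1$, the same slicing estimate needs only $\sum_k(\text{boundary length at scale }k)=\infty$.

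\textbf{Remaining points.} One must then verify the soft topological facts for the iterated construction — that $\partial\xx$ is a single locally connected Jordan curve, in particular at $x_0$, and that $\varphi$ extends continuously across $x_0$ while remaining a homeomorphism onto all of $\partial\dd$ — and, for (2), carry out the explicit quasihyperbolic computation confirming $s$‑hyperbolic growth with $s=p-1$. The delicate arithmetic is the scale‑by‑scale balancing in the previous paragraph against the constraints (i)–(ii); this is where the case $s>p-1$ would produce a convergent series instead, in agreement with Theorem~\ref{thm:main}.
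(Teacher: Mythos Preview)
Your slicing idea --- cross the thin necks by straight lines, observe that the endpoints land on prescribed, well--separated arcs of $\partial\dd$, and thereby lower--bound $\int|Dh|^p$ --- is exactly the mechanism the paper uses. For part~(1) your scheme is essentially the paper's: they build a ``spiral'' out of rectangles $R_k$ of width $w_k$ and height $h_k$ with $\sum w_k<\infty$, $\sum h_k=\infty$, send the two long sides of $R_k$ to arcs at distance $d_k$ on $\partial\dd$, and get $\int_{R_k}|DH|\ge h_k d_k$; the choice $h_k=1/k$, $d_k=1/\log(1+k)$ finishes it. Your ``Reduction'' (retracting to $\overline\dd$ and arguing surjectivity by degree) is harmless but unnecessary: the only fact used is that a continuous extension sends a segment with endpoints on $\partial\xx$ to a curve joining the $\varphi$--images, hence of length at least their distance.

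For part~(2), however, your displayed inequality is provably too weak, and this is a genuine gap. You apply H\"older \emph{globally},
\[
\Bigl(\int_\xx|Dh|^p\Bigr)^{1/p}\ \ge\ |\xx|^{1/p-1}\int_{\R}\sum_I\bigl|\varphi(\partial^-I)-\varphi(\partial^+I)\bigr|\,\dtext t,
\]
so your contradiction requires the right--hand side --- a quantity depending only on $\xx$ and $\varphi$, and bounding $\int_\xx|Dh|$ from below for \emph{every} continuous extension --- to diverge. But if $\xx$ has $(p-1)$--hyperbolic growth then Theorem~\ref{thm:main}(2) itself furnishes a $\W^{1,q}$ homeomorphic extension for every $1<q<p$, in particular a $\W^{1,1}$ extension; hence that right--hand side is necessarily finite and no contradiction ensues. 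The same objection kills the coarea variant you mention. The paper avoids this by using the single cusp domain $\xx_s$ with $s=p-1$ (no iteration needed) and applying H\"older \emph{locally} on each horizontal slab $S_k$ of height $\epsilon_k$:
\[
\int_{S_k}|DH|^p\ \ge\ \frac{\bigl(\int_{S_k}|DH|\bigr)^p}{|S_k|^{p-1}}\ \ge\ \frac{c\,(\epsilon_k d_k)^p}{\epsilon_k^{p-1}\bigl(\sum_{j\ge k}\epsilon_j\bigr)^{(p-1)/s}}\ =\ \frac{c\,d_k^p\,\epsilon_k}{\sum_{j\ge k}\epsilon_j},
\]
where the exponent $(p-1)/s=1$ is precisely where the threshold $s=p-1$ enters. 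With $\epsilon_k=1/k^2$ and $d_k=(\log(1+k))^{-1/p}$ the sum over $k$ diverges. The localisation is not cosmetic: it is what converts a finite $\W^{1,1}$ quantity into an infinite $\W^{1,p}$ one via the small measures $|S_k|$.
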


To conclude, as  promised earlier,   we extend  our main result to the case where the domains are not simply connected. The following  generalization of Theorem \ref{thm:main} holds.

\begin{theorem}\label{thm:multiply} Let $\xx$ and $\yy$ be multiply connected Jordan domains with $\partial \Y$ rectifiable. Let $\varphi \colon \partial \X \onto \partial \Y$ be a given homeomorphism which maps the outer boundary component of $\X$ to the outer boundary component of $\Y$.  Then there is a homeomorphic extension $h \colon \overline{\X} \onto \overline{\Y}$ such that
\begin{enumerate}
\item $h \in \W^{1,1} (\X, \C)$, provided $\partial\X$ is rectifiable, and 
\item $h\in \W^{1,p} (\X, \C)$ for $1<p<2$, provided  $\xx$ has  \emph{$s$-hyperbolic growth} with $s>p-1$. 
\end{enumerate}
\end{theorem}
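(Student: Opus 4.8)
The plan is to \emph{cut} both domains along a compatible system of arcs in order to reduce to the simply connected case already settled in Theorem~\ref{thm:main}. Since $\varphi$ carries the outer boundary component of $\X$ to that of $\Y$, it induces a bijection between the inner boundary components of $\X$ and those of $\Y$. On the $\Y$ side I would first choose, for each inner component, a point on it and a point on the outer component $\partial_0\Y$, and join each such pair inside $\Y$ by a rectifiable Jordan arc, arranging the $\ell-1$ arcs to be pairwise disjoint simple crosscuts; their union $\bigcup_i\tau_i$ is then a ``spanning tree'' of the holes, so that $\Y':=\Y\setminus\bigcup_i\tau_i$ is a simply connected Jordan domain, and $\partial\Y'$ is rectifiable because $\partial\Y$ is and the $\tau_i$ were chosen rectifiable. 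Pulling the endpoints back by $\varphi$ to $\partial\X$ and joining the corresponding pairs inside $\X$ by pairwise disjoint rectifiable simple arcs $\sigma_1,\dots,\sigma_{\ell-1}$ matched topologically to the $\tau_i$, I obtain a simply connected Jordan domain $\X':=\X\setminus\bigcup_i\sigma_i$. That such rectifiable cuts with the prescribed endpoints exist is immediate when $\partial\X$ is rectifiable (almost every boundary point is then accessible along a straight segment), and in the hyperbolic growth case it can likewise be arranged, essentially by taking quasihyperbolic geodesics, whose length near the boundary is finite thanks to the growth bound.

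Next I would extend $\varphi$ to a boundary homeomorphism $\varphi'\colon\partial\X'\onto\partial\Y'$. The curve $\partial\X'$ is made up of the subarcs of $\partial\X$ between the endpoints of the $\sigma_i$, together with two copies $\sigma_i^{+},\sigma_i^{-}$ of each cut, and similarly $\partial\Y'$ carries $\tau_i^{+},\tau_i^{-}$. On the subarcs of $\partial\X$ set $\varphi'=\varphi$; on $\sigma_i^{\pm}$ let $\varphi'$ be one fixed homeomorphism $g_i\colon\sigma_i\onto\tau_i$ sending endpoints to endpoints, with $\sigma_i^{+}\mapsto\tau_i^{+}$ and $\sigma_i^{-}\mapsto\tau_i^{-}$; with orientations matched this glues to a homeomorphism $\varphi'\colon\partial\X'\onto\partial\Y'$. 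Since $\X'$ and $\Y'$ are Jordan domains with $\partial\Y'$ rectifiable, and $\partial\X'$ is rectifiable in case (1) while $\X'$ has $s$-hyperbolic growth in case (2), Theorem~\ref{thm:main} supplies an orientation preserving homeomorphic extension $h'\colon\overline{\X'}\onto\overline{\Y'}$ lying in $\W^{1,1}(\X',\C)$, respectively in $\W^{1,p}(\X',\C)$.

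Now I would glue back. Because $h'|_{\sigma_i^{+}}=h'|_{\sigma_i^{-}}=g_i$ and the analogous identity holds on the $\tau_i$-side, $h'$ is compatible with the identifications $\sigma_i^{+}\!\sim\!\sigma_i^{-}$ and $\tau_i^{+}\!\sim\!\tau_i^{-}$ which recover $\overline{\X}$ from $\overline{\X'}$ and $\overline{\Y}$ from $\overline{\Y'}$; hence $h'$ descends to a continuous map $h\colon\overline{\X}\to\overline{\Y}$. This $h$ is a bijection (it equals $\varphi$ on $\partial\X$, equals $g_i$ on $\sigma_i$, and equals $h'$ off the cuts, and these pieces are injective and have the right images), so being a continuous bijection between compact Hausdorff spaces it is a homeomorphism; its orientation is inherited from $h'$. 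For the Sobolev regularity: $\bigcup_i\sigma_i$ is Lebesgue null and $\X\setminus\bigcup_i\sigma_i=\X'$ as open subsets of $\C$, so $h=h'$ almost everywhere, whence $h\in L^p(\X,\C)$ and $h$ has $L^p$ partial derivatives on $\X\setminus\bigcup_i\sigma_i$. Since each $\sigma_i$ is rectifiable, almost every line parallel to a coordinate axis meets $\bigcup_i\sigma_i$ in a finite set, off which $h$ is locally absolutely continuous while being continuous everywhere; a function absolutely continuous on a punctured interval, continuous at the puncture, with integrable derivative, is absolutely continuous across the puncture. By the ACL characterization this yields $h\in\W^{1,1}(\X,\C)$ in case (1) and $h\in\W^{1,p}(\X,\C)$ in case (2).

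The step I expect to be the main obstacle is the choice of the cuts $\sigma_i$ in case (2): one must ensure that the slit domain $\X'$ still has $s$-hyperbolic growth. The underlying reason this works is that slitting a domain along a rectifiable arc that, near each of its two endpoints, meets the boundary at a definite transverse rate only introduces a \emph{logarithmic} deterioration of the quasihyperbolic metric near the cut, as in the model of a slit disk, and $\log(1/\dist)$ is dominated by $(1/\dist)^{1-s}$; away from the cuts the quasihyperbolic geometry of $\X'$ is essentially that of $\X$, cusps included, and any point of $\X'$ can be joined to the fixed base point through the thick part of $\X'$ at bounded quasihyperbolic cost precisely because a domain with $s$-hyperbolic growth has no thin tentacles. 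Turning this into a uniform estimate — in particular choosing the $\sigma_i$ (say as quasihyperbolic geodesics, perturbed to be disjoint and simple and to hit $\partial\X$ transversally near the prescribed endpoints) so that the above reasoning applies with a single constant — is the technical heart of the argument; the gluing and the ACL removal of the cuts are then routine.
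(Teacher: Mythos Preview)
Your cut-and-glue strategy is natural, and the gluing/ACL portion is fine, but there is a genuine gap at the point where you invoke Theorem~\ref{thm:main}: the slit domains $\X':=\X\setminus\bigcup_i\sigma_i$ and $\Y':=\Y\setminus\bigcup_i\tau_i$ are \emph{not} Jordan domains. A Jordan domain is, by definition, bounded by a simple closed curve in~$\C$, whereas the topological boundary of your $\Y'$ is $\partial\Y\cup\bigcup_i\tau_i$, and any closed parametrization of it must traverse each $\tau_i$ twice (once as $\tau_i^+$, once as $\tau_i^-$). Thus Theorem~\ref{thm:main} does not apply as stated, and more to the point its proof breaks down on the target side: that proof uses a constant-speed parametrization $\gamma\colon\partial\dd\to\partial\Y$ together with Kovalev's Theorem~\ref{thm:lip}, which requires $\gamma$ to be a Lipschitz \emph{embedding}. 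For a slit target the parametrization is two-to-one along the slits, so no homeomorphic Lipschitz extension $G\colon\overline{\dd}\to\overline{\Y'}$ is produced.

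The paper circumvents this by cutting each domain into \emph{two} honest Jordan pieces rather than one slit piece: in the doubly connected case one uses two disjoint crosscuts $I_1,I_2\subset\X$ (and matching rectifiable arcs $\gamma_1,\gamma_2\subset\Y$) to obtain genuine Jordan domains $\X_1,\X_2$ and $\Y_1,\Y_2$, then applies Theorem~\ref{thm:main} to each pair and glues along the common cuts exactly as you do. For higher connectivity the paper proceeds by induction, peeling off one inner component at a time. The verification of $s$-hyperbolic growth for the pieces, which you correctly flag as the main obstacle, is handled not by cutting $\X$ directly but by pulling back to a conformal model (an annulus or circular domain), cutting \emph{that} along straight segments, and exploiting the pointwise bound $|g'(z)|\le C\big[\dist(z,\partial\Omega)\log^{1/(1-s)}(1/\dist(z,\partial\Omega))\big]^{-1}$, which follows from the growth hypothesis via Koebe. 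Your heuristic that a transversal rectifiable slit costs only a logarithm is exactly right, but making it uniform without the conformal model is delicate; once you switch to two crosscuts and the conformal normalization, both the Jordan-domain issue and the growth estimate become tractable.
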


\subsection*{Acknowledgements}
We thank Pekka Koskela for posing the main question of this paper to us.

\section{Preliminaries}\label{sec:pre}
\subsection{The Dirichlet problem}
Let  $\Omega $ be a bounded domain in the complex plane.  A function $u \colon \Omega \to \mathbb R$  in the Sobolev class $\W^{1,p}_{\loc} (\Omega)$, $1<p<\infty$,  is called {\it $p$-harmonic} if
\begin{equation}\label{eq:eijoo}\Div \abs{\nabla u}^{p-2} \nabla u =0. \end{equation}
We call $2$-harmonic functions simply \emph{harmonic}.
 
There are two  formulations of the Dirichlet boundary value problem for  the $p$-harmonic equation~\eqref{eq:eijoo}. We first consider  the variational formulation.
\begin{lemma}
Let $u_\circ \in \W^{1,p}(\Omega)$ be a given Dirichlet data. There exists precisely one function $u\in u_\circ +  \W_\circ^{1,p}(\Omega)$ which minimizes the $p$-harmonic energy:
\[\int_\Omega \abs{\nabla u}^p =\inf \left\{ \int_\Omega \abs{\nabla w}^p\colon w\in u_\circ +  \W_\circ^{1,p}(\Omega) \right\}.\]
\end{lemma}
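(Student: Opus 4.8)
The plan is to run the direct method of the calculus of variations, exploiting the standing assumption $1<p<\infty$ that is implicit in the notion of $p$-harmonicity. I would set $\mathcal I[w]=\int_\Omega\abs{\nabla w}^p$ and $m=\inf\{\mathcal I[w]\colon w\in u_\circ+\W_\circ^{1,p}(\Omega)\}$; since $u_\circ$ itself is admissible, $0\le m\le\mathcal I[u_\circ]<\infty$.

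\emph{Existence.} I would pick a minimizing sequence $(w_j)\subset u_\circ+\W_\circ^{1,p}(\Omega)$, so that $\norm{\nabla w_j}_{L^p(\Omega)}\to m^{1/p}$ stays bounded. Since $w_j-u_\circ\in\W_\circ^{1,p}(\Omega)$ and $\Omega$ is bounded, the Poincar\'e inequality bounds $\norm{w_j-u_\circ}_{L^p(\Omega)}$ by a constant multiple of $\norm{\nabla w_j-\nabla u_\circ}_{L^p(\Omega)}$, hence $(w_j)$ is bounded in $\W^{1,p}(\Omega)$. As $\W^{1,p}(\Omega)$ is reflexive for $1<p<\infty$, a subsequence converges weakly, $w_j\rightharpoonup u$ in $\W^{1,p}(\Omega)$. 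The admissible set $u_\circ+\W_\circ^{1,p}(\Omega)$ is convex and norm-closed, hence weakly closed, so $u$ is admissible. Finally $\mathcal I$ is convex and norm-continuous on $\W^{1,p}(\Omega)$, therefore sequentially weakly lower semicontinuous, and $\mathcal I[u]\le\liminf_j\mathcal I[w_j]=m$; combined with $\mathcal I[u]\ge m$ this shows $u$ is a minimizer.

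\emph{Uniqueness.} Given two minimizers $u_1,u_2$, their average $\tfrac12(u_1+u_2)$ is again admissible, and pointwise convexity of $\xi\mapsto\abs\xi^p$ yields $\mathcal I\big[\tfrac12(u_1+u_2)\big]\le\tfrac12\mathcal I[u_1]+\tfrac12\mathcal I[u_2]=m$. Since the reverse inequality is automatic, equality holds, which forces the convexity inequality $\abs{\tfrac{\nabla u_1+\nabla u_2}{2}}^p\le\tfrac12\abs{\nabla u_1}^p+\tfrac12\abs{\nabla u_2}^p$ to be an equality for a.e.\ $x$. Strict convexity of $\xi\mapsto\abs\xi^p$ on $\R^2$, valid precisely because $p>1$, then gives $\nabla u_1=\nabla u_2$ a.e.; since $u_1-u_2\in\W_\circ^{1,p}(\Omega)$ has vanishing gradient, the Poincar\'e inequality gives $u_1=u_2$.

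I do not expect a serious obstacle: every step is classical. The only two places needing a little care are the weak lower semicontinuity of $\mathcal I$ (equivalently, reflexivity of $\W^{1,p}$ together with convexity, or an appeal to Mazur's lemma) and the strict-convexity step in the uniqueness part — both genuinely use $p\in(1,\infty)$ and both would fail at $p=1$. A minor bookkeeping point is checking that the weak limit stays in the affine class $u_\circ+\W_\circ^{1,p}(\Omega)$, which is handled by the weak closedness of closed convex subsets of a reflexive space.
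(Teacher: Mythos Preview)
Your argument is correct and is precisely the standard direct-method proof one would expect. The paper does not actually supply its own proof of this lemma: after stating it together with the next lemma it simply writes ``For a reference for proofs of these facts we refer to~[IKOapprox].'' So there is nothing to compare against beyond noting that your write-up is the classical existence-via-reflexivity and uniqueness-via-strict-convexity argument that the cited reference would contain.
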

The variational formulation coincides with the classical formulation of the Dirichlet problem.

\begin{lemma}\label{proexist} Let $\Omega \subset \C$ be a bounded Jordan domain and $u_\circ \in \W^{1,p}(\Omega) \cap \mathscr C (\overline{\Omega})$. Then there exists a unique $p$-harmonic function $u\in \W^{1,p}(\Omega) \cap \mathscr C (\overline{\Omega})$ such that $u_{|_{\partial \Omega}}=u_{\circ |_{\partial \Omega}}$.
\end{lemma}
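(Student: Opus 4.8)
The plan is to take $u$ to be the variational minimizer supplied by the preceding lemma and then to upgrade it to a function continuous on $\overline{\Omega}$ that attains the data $u_\circ$. So let $u \in u_\circ + \W_\circ^{1,p}(\Omega)$ be the unique minimizer of $w \mapsto \int_\Omega |\nabla w|^p$. Computing the first variation shows that $u$ is a weak solution of $\Div \abs{\nabla u}^{p-2}\nabla u = 0$, i.e. $u$ is $p$-harmonic in $\Omega$, and by the interior regularity theory for the $p$-Laplacian (Uraltseva, Evans, Lewis, DiBenedetto, Tolksdorf) we have $u \in \mathscr C^{1,\alpha}_{\loc}(\Omega)$; in particular $u$ is continuous in $\Omega$. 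It then remains to prove that $u$ extends continuously to $\overline{\Omega}$ with $u|_{\partial\Omega} = u_\circ|_{\partial\Omega}$, and to establish uniqueness.

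For the boundary behaviour I would appeal to nonlinear potential theory (see Maz'ya, and the monograph of Heinonen--Kilpel\"ainen--Martio): at a \emph{regular} boundary point $\xi \in \partial\Omega$ for the $p$-Laplacian, the variational (Sobolev) solution satisfies $\lim_{x\to\xi} u(x) = u_\circ(\xi)$, precisely because $u_\circ \in \W^{1,p}(\Omega)\cap\mathscr C(\overline{\Omega})$. The geometric content I need is that \emph{every} boundary point of a Jordan domain is regular. This follows from the fact that $\partial\Omega$ is a non-degenerate continuum: for each $\xi \in \partial\Omega$ and each small $r > 0$, the connectedness of $\partial\Omega$ (a boundary-bumping argument) forces $(\C\setminus\Omega)\cap\overline B(\xi,r)$ to contain a continuum joining $\xi$ to $\partial B(\xi,r)$, and in the plane such a set carries a definite share of the variational $p$-capacity of $B(\xi,2r)$, uniformly in $r$. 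Hence $\C\setminus\Omega$ satisfies the capacity density condition at $\xi$, which is far more than enough for the Wiener integral at $\xi$ to diverge; thus $\xi$ is regular. Consequently $u \in \mathscr C(\overline{\Omega})$ with $u = u_\circ$ on $\partial\Omega$.

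For uniqueness, suppose $u_1, u_2 \in \W^{1,p}(\Omega)\cap\mathscr C(\overline{\Omega})$ are both $p$-harmonic with $u_1 = u_2$ on $\partial\Omega$. Then $f := u_1 - u_2 \in \W^{1,p}(\Omega)\cap\mathscr C(\overline{\Omega})$ vanishes on $\partial\Omega$; a standard truncation argument (the functions $\sign(f)\,(|f|-\epsilon)_+$ have compact support in $\Omega$ and converge to $f$ in $\W^{1,p}(\Omega)$ as $\epsilon\to0$) shows $f \in \W_\circ^{1,p}(\Omega)$, so $u_1 \in u_2 + \W_\circ^{1,p}(\Omega)$. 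Since a $p$-harmonic function lying in $\W^{1,p}$ is the unique minimizer of the $p$-energy in its own Dirichlet class, both $u_1$ and $u_2$ coincide with the minimizer over $u_2 + \W_\circ^{1,p}(\Omega)$, forcing $u_1 = u_2$; alternatively one may simply quote the comparison principle for $p$-harmonic functions. I expect the only genuine obstacle to be the boundary regularity step --- certifying that a Jordan curve is ``$p$-thick'' at each of its points, so that the Wiener criterion applies --- the rest being routine variational calculus and potential theory.
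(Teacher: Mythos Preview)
Your argument is correct and follows the standard potential-theoretic route: variational existence, interior $\mathscr C^{1,\alpha}$-regularity, boundary regularity via the Wiener criterion together with the fact that a Jordan curve is $p$-thick at each of its points, and uniqueness through the comparison principle (or equivalently uniqueness of the energy minimizer in its Dirichlet class). Each step is sound; in particular, the boundary-bumping observation that $\partial\Omega$ --- being a Jordan curve --- yields a continuum of diameter $\ge r$ in $(\C\setminus\Omega)\cap\overline B(\xi,r)$ is exactly what drives the uniform capacity density and hence divergence of the Wiener integral.

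As for comparison with the paper: there is nothing to compare. The paper does not supply its own proof of this lemma; it simply records the statement and refers the reader to \cite{IKOapprox} for a proof. Your write-up is therefore more detailed than what the paper offers, and would serve as a self-contained justification where the paper defers to the literature.
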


For a reference for proofs of these facts we refer to~\cite{IKOapprox}.

 \subsection{The Rad\'o-Kneser-Choquet Theorem}
 \begin{lemma}\label{lem:RKC}
Consider a  Jordan domain $\mathbb X \subset \C$ and a bounded convex domain $\mathbb Y \subset \mathbb C$. Let $h \colon \partial \mathbb X \onto \partial \mathbb Y$ be a homeomorphism  and $H \colon \mathbb U \to \C$ denote its harmonic extension. Then $H$ is a $\mathscr C^\infty$-diffeomorphism of $\mathbb X$ onto $\mathbb Y$.
\end{lemma}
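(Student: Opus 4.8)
The plan is to reduce the general Jordan-domain case to the classical statement for $\mathbb X = \mathbb D$ and then to invoke Lewy's theorem for the smoothness. First I would use the Riemann Mapping Theorem to fix a conformal homeomorphism $\Phi \colon \overline{\mathbb D} \onto \overline{\mathbb X}$; by Carath\'eodory's theorem $\Phi$ extends to a homeomorphism of the closures since $\partial\mathbb X$ is a Jordan curve. Writing $\psi = h \circ \Phi|_{\partial\mathbb D} \colon \partial\mathbb D \onto \partial\mathbb Y$, which is again a boundary homeomorphism, let $G$ denote the harmonic extension of $\psi$ to $\mathbb D$. Because harmonicity and the Dirichlet solution are conformally invariant in the plane, $G \circ \Phi^{-1}$ is harmonic on $\mathbb X$ with boundary values $h$, hence by uniqueness of the harmonic extension (Lemma~\ref{proexist} with $p=2$) equals $H$. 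Since $\Phi^{-1}$ is a conformal (in particular $\mathscr C^\infty$-diffeomorphic and orientation-preserving) homeomorphism $\overline{\mathbb X}\onto\overline{\mathbb D}$, it suffices to prove the lemma when $\mathbb X = \mathbb D$.

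For the disk case I would invoke the classical Rad\'o--Kneser--Choquet theorem itself: if $\mathbb Y$ is a bounded convex domain and $\psi\colon\partial\mathbb D\onto\partial\mathbb Y$ is a homeomorphism, then its harmonic extension $G$ is a homeomorphism of $\overline{\mathbb D}$ onto $\overline{\mathbb Y}$ and a diffeomorphism of $\mathbb D$ onto $\mathbb Y$. The standard argument runs as follows. Each coordinate function of $G$ is harmonic, and by the Poisson representation together with the convexity of $\mathbb Y$ one checks that $G(\mathbb D)\subset\mathbb Y$ and that $G$ is proper onto $\overline{\mathbb Y}$. The key point is that $G$ is locally injective on $\mathbb D$: if $\det DG(z_0)=0$ for some $z_0\in\mathbb D$, then some nontrivial directional derivative $a\cdot DG(z_0)$ vanishes, so the harmonic function $u = a\cdot(G-G(z_0))$ has a critical point at $z_0$ and hence its zero set near $z_0$ consists of at least two arcs crossing transversally; tracing these arcs to $\partial\mathbb D$ one finds that the level set $\{u=0\}$ meets $\partial\mathbb D$ in at least four points, forcing the boundary curve $\psi$ to cross a line through $G(z_0)$ at least four times, which contradicts convexity of $\mathbb Y$ (a line meets $\partial\mathbb Y$ in at most two points, or in a segment, and the boundary orientation argument rules out the degenerate case). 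Therefore $\det DG$ never vanishes on $\mathbb D$, so $G$ is a local diffeomorphism; combined with properness and a degree argument it is a global diffeomorphism of $\mathbb D$ onto $\mathbb Y$.

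Finally, to upgrade "diffeomorphism" to "$\mathscr C^\infty$-diffeomorphism" I would note that the components of $G$ are harmonic, hence real-analytic, on $\mathbb D$, and that $\det DG\neq 0$ throughout $\mathbb D$ by the previous step; the inverse function theorem then gives that $G^{-1}$ is real-analytic as well, so $G$ is a $\mathscr C^\infty$- (indeed $\mathscr C^\omega$-) diffeomorphism of $\mathbb D$ onto $\mathbb Y$. Transporting back through $\Phi$, the map $H = (G\circ\Phi^{-1})$ is a $\mathscr C^\infty$-diffeomorphism of $\mathbb X$ onto $\mathbb Y$, as claimed.

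The main obstacle is the non-vanishing of the Jacobian in the disk case; everything else (conformal reduction, real-analytic regularity, the degree/properness bookkeeping) is routine. The subtlety in the Jacobian argument is handling the degenerate situation in which the level set of the auxiliary harmonic function contains a whole boundary arc or the line supports a full boundary segment of $\partial\mathbb Y$; this is dealt with by using that $\psi$ is a \emph{homeomorphism} onto $\partial\mathbb Y$ together with the strict separation properties of convex sets, but it requires care.
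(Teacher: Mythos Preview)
Your proof sketch is correct and follows the classical route: conformal reduction to the disk via Carath\'eodory, the Kneser--Choquet level-set argument showing the Jacobian cannot vanish (using that a line meets a convex boundary in at most two points), and then Lewy's observation that a univalent harmonic map has nonvanishing Jacobian, hence is a real-analytic diffeomorphism. The paper, however, does not give its own proof of this lemma at all; it simply cites \cite{Dub, IOrado} for the argument. What you have written is essentially the proof one finds in those references, so there is nothing to compare beyond noting that you have supplied the standard argument where the paper chose to quote it.
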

For the proof of this lemma we refer to~\cite{Dub, IOrado}. The following $p$-harmonic analogue of the Rad\'o-Kneser-Choquet Theorem is due to Alessandrini and Sigalotti~\cite{AS}, see also~\cite{IOsimply}.
 
\begin{proposition}
Let $\X$  be a Jordan domain  in $\mathbb C$, $1<p< \infty$, and $h=u+iv \colon \overline{\X } \to \mathbb C$ be a continuous mapping whose coordinate functions  are $p$-harmonic. Suppose that $\Y$ is convex and that $h \colon \partial \X \onto \partial \Y$ is a homeomorphism. Then $h$ is a diffeomorphism from $\X$ onto $\Y$.
\end{proposition}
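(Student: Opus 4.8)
The plan is to run the classical Rad\'o--Kneser--Choquet argument (the one behind Lemma~\ref{lem:RKC}), isolating the single place where $p\neq 2$ genuinely interferes: the two coordinate functions no longer span a linear family of solutions. I would use two standard facts about planar $p$-harmonic functions. First, interior regularity: $u,v\in\mathscr C^{1,\alpha}_{\loc}(\X)$, and on any open subset of $\X$ where the gradient does not vanish the $p$-Laplace equation is uniformly elliptic, quasilinear, with analytic structure, so that $u$ and $v$ are real-analytic there. Second, the behaviour at critical points: a non-constant planar $p$-harmonic function has only isolated critical points, and near such a point its level set is a union of $k\geq 2$ analytic arcs crossing transversally (Bojarski--Iwaniec, Manfredi).

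First I would show that $\nabla u\neq 0$ and $\nabla v\neq 0$ everywhere in $\X$; this step is identical to the harmonic case and uses nothing about the exponent. By the maximum principle $u(\X)\subset(m,M)$, where $m=\min_{\partial\X}u$ and $M=\max_{\partial\X}u$. Since $h$ maps $\partial\X$ homeomorphically onto the boundary of the convex domain $\Y$, for each $c\in(m,M)$ the line $\{\re w=c\}$ meets $\partial\Y$ in exactly two points, so $u^{-1}(c)\cap\partial\X$ consists of two points that split $\partial\X$ into two arcs on which $u$ is weakly monotone. If $u$ had a critical point $z_0\in\X$, set $c=u(z_0)\in(m,M)$; by the structure fact $z_0$ is isolated and at least four arc-ends of $u^{-1}(c)$ issue from it. No component of $u^{-1}(c)\cap\X$ can be a closed curve, for otherwise $u\equiv c$ on the subdomain it bounds and hence $u$ is constant, contradicting that $h|_{\partial\X}$ is a homeomorphism; and every arc-end not returning to the finite critical set must reach $\partial\X$ (its closure can only touch $\partial\X$ inside the two-point set $u^{-1}(c)\cap\partial\X$). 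Tracing the $\geq 4$ ends out of $z_0$ yields a finite tree in $\overline\X$ with a vertex of degree $\geq 4$, hence at least four leaves on $\partial\X$ --- contradicting $\#(u^{-1}(c)\cap\partial\X)=2$. The same argument with $\{\im w=c\}$ gives $\nabla v\neq 0$.

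Next I would reduce the theorem to the non-vanishing of the Jacobian $J_h$. With no interior critical points, the sets $\alpha_c:=u^{-1}(c)\cap\X$, $c\in(m,M)$, are disjoint analytic arcs foliating $\X$, and the boundary count forces each $\alpha_c$ to be a single arc joining the two boundary arcs. If $J_h$ never vanished in $\X$, then along each $\alpha_c$, whose tangent direction is $(\nabla u)^\perp$, the tangential derivative of $v$ would be nonzero, so $v$ would be strictly monotone on $\alpha_c$; since the arcs are disjoint and separated by the value of $u$, this makes $h$ injective on $\X$. By interior regularity $h$ is real-analytic with $J_h\neq 0$, hence a local diffeomorphism, so it maps $\X$ diffeomorphically onto the open set $h(\X)$; a degree computation from $h(\partial\X)=\partial\Y$ --- the loop $h|_{\partial\X}$ winds once around points of $\Y$ and not at all around points of $\C\setminus\overline\Y$ --- identifies $h(\X)=\Y$. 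Thus $h$ is a real-analytic diffeomorphism of $\X$ onto $\Y$, which is the assertion.

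The hard step is $J_h\neq 0$ in $\X$, and it is exactly here that $p\neq 2$ bites. Suppose $J_h(z_0)=0$; by the previous step $\nabla u(z_0)$ and $\nabla v(z_0)$ are nonzero and parallel, say $\nabla u(z_0)=t\,\nabla v(z_0)$. In the harmonic case one is done at once: $u-tv$ is again harmonic and has a critical point at $z_0$, contradicting the first step applied to the direction $(1,-t)$ --- and it is precisely this appeal to linearity that is unavailable for $p\neq 2$. The substitute I would pursue follows Alessandrini--Sigalotti: the complex gradients $f:=u_x-iu_y$ and $g:=v_x-iv_y$ are generalised quasiregular mappings (a hodograph transform), nowhere zero by the first step, so one may study the auxiliary map $\Phi:=\overline g\,f\colon\X\to\C\setminus\{0\}$, for which $J_h=-\im\Phi$; thus $J_h\neq 0$ is equivalent to $\Phi(\X)$ lying in a single open half-plane. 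On the boundary one controls $\Phi$ using the convexity of $\Y$: a barrier argument and a boundary Harnack estimate pin down the directions of $\nabla u$ and $\nabla v$ near $\partial\X$, hence the total rotation of the frame $(\nabla u,\nabla v)$ around $\partial\X$. In the interior, the quasiregularity of $f$ and $g$ gives an argument-principle count of the zeros of $\im\Phi$ in $\X$ with multiplicities. Matching the boundary rotation against the interior count forces $\im\Phi$ to have no zero in $\X$. Carrying this last analysis out rigorously is the technical core of the theorem of Alessandrini and Sigalotti~\cite{AS}; the first two steps are what convert it into the diffeomorphism statement above.
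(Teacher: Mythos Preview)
The paper does not supply a proof of this proposition at all: it is stated in the preliminaries section and attributed directly to Alessandrini and Sigalotti~\cite{AS} (with a pointer to~\cite{IOsimply}), so there is nothing to compare against on the paper's side.

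Your outline is a faithful reconstruction of the Alessandrini--Sigalotti strategy: the level-set/topological step showing $\nabla u,\nabla v\neq 0$ from convexity is the standard Rad\'o--Kneser--Choquet argument and needs no linearity; the reduction to $J_h\neq 0$ via monotonicity of $v$ along the $u$-level arcs is correct; and you correctly identify that the only genuinely new ingredient for $p\neq 2$ is the Jacobian step, handled in~\cite{AS} through the quasiregular structure of the complex gradients. One small caution: your description of the final mechanism (the auxiliary map $\Phi=\bar g f$, boundary rotation versus interior argument-principle count) is somewhat heuristic, and you yourself defer the rigorous execution back to~\cite{AS}; so as a self-contained proof this is incomplete, though as a roadmap it is accurate and matches what the cited references do.
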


\subsection{Sobolev homeomorphic extensions onto a Lipschitz target}\label{sec:liptriv}
Combining the results in this section allows us to easily solve Question~\ref{Q2} for convex targets.
\begin{proposition}\label{pr:blah}
Let $\X$ and $\Y$ be Jordan domains in the plane with $\Y$ convex, and let $p$ be given with $1<p< \infty$. Suppose that $\varphi \colon \partial \X \onto \partial \Y$ is a homeomorphism. Then   
there exists a continuous $g\colon \overline{\X} \to \C$  in $\W^{1,p} (\X, \C)$ such that $g(x)=\varphi(x)$ on $\partial \X$ if and only if there exists a homeomorphism  $h\colon \overline{\X} \to \overline{\Y}$  in $\W^{1,p} (\X, \C)$ such that $h(x)=\varphi(x)$ on $\partial \X$.
\end{proposition}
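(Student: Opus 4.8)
The plan is to prove the nontrivial (``only if'') implication by taking the $p$-harmonic extension of $\varphi$ and invoking the $p$-harmonic Rad\'o--Kneser--Choquet theorem (the Alessandrini--Sigalotti proposition stated above). The reverse implication is immediate: a homeomorphism $h\colon\overline{\X}\to\overline{\Y}\subset\C$ is in particular a continuous map $\overline{\X}\to\C$ lying in $\W^{1,p}(\X,\C)$ with $h|_{\partial\X}=\varphi$, so it already serves as the required $g$.

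For the forward direction, suppose $g\colon\overline{\X}\to\C$ is continuous, belongs to $\W^{1,p}(\X,\C)$, and satisfies $g|_{\partial\X}=\varphi$. Writing $g=g_1+ig_2$, we have $g_1,g_2\in\W^{1,p}(\X)\cap\mathscr C(\overline{\X})$. First I would apply Lemma~\ref{proexist} with $\Omega=\X$ to each coordinate separately, obtaining $p$-harmonic functions $u,v\in\W^{1,p}(\X)\cap\mathscr C(\overline{\X})$ with $u|_{\partial\X}=g_1|_{\partial\X}=\re\varphi$ and $v|_{\partial\X}=g_2|_{\partial\X}=\im\varphi$. Set $h=u+iv$. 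By construction $h$ is continuous on $\overline{\X}$, lies in $\W^{1,p}(\X,\C)$, and restricts to $\varphi$ on $\partial\X$.

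Next I would invoke the $p$-harmonic analogue of the Rad\'o--Kneser--Choquet theorem: since $\Y$ is convex and $h\colon\partial\X\onto\partial\Y$ is the homeomorphism $\varphi$, the map $h$ is a diffeomorphism of $\X$ onto $\Y$. It then remains to upgrade this to a homeomorphism $\overline{\X}\onto\overline{\Y}$. Since $h(\X)=\Y$ and $h(\partial\X)=\varphi(\partial\X)=\partial\Y$ are disjoint, $h$ is a continuous bijection from the compact space $\overline{\X}$ onto the Hausdorff space $\overline{\Y}$, hence a homeomorphism. This produces the desired homeomorphic $\W^{1,p}$-extension and completes the argument.

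I do not expect a serious obstacle here; the whole content is carried by the two quoted results, and the proposition is essentially an assembly statement. The only points needing care are (i) verifying that the hypothesis ``$\varphi$ admits a continuous $\W^{1,p}$-extension'' is precisely the regularity of the Dirichlet data ($u_\circ\in\W^{1,p}(\Omega)\cap\mathscr C(\overline{\Omega})$) required to apply the classical formulation of the $p$-harmonic Dirichlet problem in Lemma~\ref{proexist}, and (ii) the elementary topological observation that a continuous bijection between the relevant closures is automatically a homeomorphism, so that the interior diffeomorphism supplied by the Alessandrini--Sigalotti result genuinely realizes the given boundary homeomorphism $\varphi$ on $\partial\X$.
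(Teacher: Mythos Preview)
Your proposal is correct and is exactly the argument the paper has in mind: the proposition is presented there as an immediate consequence of Lemma~\ref{proexist} applied coordinatewise together with the Alessandrini--Sigalotti $p$-harmonic Rad\'o--Kneser--Choquet theorem, and your write-up fills in precisely these steps (including the routine topological upgrade to a homeomorphism of the closures).
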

Now, replacing the convex $\Y$ by a Lipschitz domain offers no challenge. Indeed, this follows from  a global bi-Lipschitz change of  variables $\Phi \colon \mathbb C \to \mathbb C$ for which $\Phi (\Y)$ is the unit disk. If the domain in Proposition~\ref{pr:blah} is the unit disk $\mathbb D$, then the existence of a finite $p$-harmonic extension can be characterized in terms of a Douglas type condition. If $1<p<2$, then such an extension exists for an arbitrary boundary homeomorphism  (Proposition~\ref{Verchota})  and  if $2 \le p < \infty$ the extension exists if and only the boudary homeomorphism $\varphi \colon \partial \mathbb D \onto \partial \Y$ satisfies the following condition,
\begin{equation}\label{eq:pdouglas}
\int_{\partial \mathbb D} \int_{\partial \mathbb D} \left| \frac{\varphi (\xi) - \varphi (\eta)}{ \xi - \eta }\right|^p \abs{\dtext \xi } \,  \abs{\dtext \eta }  < \infty \, .
\end{equation}
For the proof of the latter fact we refer to~\cite[p. 151-152]{Stb}.

\subsection{A Carleson measure and the Hardy space $H^p$}
 Roughly speaking, a Carleson measure on a domain $\mathbb G$ is a measure that does not vanish at the boundary of $\mathbb G$ when compared to the Hausdorff $1$-measure on $\partial \mathbb G$.  We will need the notion of Carleson measure only on the unit disk $\mathbb D$.
 \begin{definition}\label{def:carleson}
 Let $\mu$  be a Borel measure on $\mathbb D$. Then $\mu$ is a {\it Carleson measure} if there is a constant $C>0$ such that
 \[\mu (S_\epsilon(\theta)) \le C \epsilon\]
 for every $\epsilon >0$. Here
  \[ S_\epsilon(\theta) = \{r e^{i\alpha} : 1 - \epsilon < r < 1, \theta - \epsilon < \alpha < \theta + \epsilon\} \, . \]
 \end{definition}
Carleson measures have many applications in harmonic analysis. A celebrated result by L. Carleson~\cite{Ca}, also see Theorem 9.3 in~\cite{Duhp}, tells us that a Borel measure $\mu$ on $\mathbb D$ is  a bounded Carleson measure if and only if the injective mapping from the Hardy space $H^p (\mathbb D)$ into a the measurable space $ L^p_\mu (\mathbb D)$ is bounded:
\begin{proposition}\label{pro:carleson}
Let $\mu$ be a Borel measure on the unit disk $\mathbb D$. Let $0<p<\infty$.  Then in order that there exist a constant $C>0$ such that
\[\left( \int_{\mathbb D} \abs{f(z)}^p \, d \mu (z)\right)^\frac{1}{p} \le C ||f||_{H^p (\mathbb D)} \quad \textnormal{for all } f \in H^p (\mathbb D) \]
it is necessary and sufficient that $\mu$ be a Carleson measure.
\end{proposition}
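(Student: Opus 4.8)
The plan is to reproduce the classical argument of Carleson, proving the two implications separately and invoking standard Hardy space theory for the analytic input. \emph{Necessity:} suppose the embedding holds with constant $C$, and for $a\in\mathbb D$ consider the test function
\[
f_a(z)=\left(\frac{1-|a|^2}{(1-\bar a z)^2}\right)^{1/p},
\]
which is holomorphic and continuous on $\overline{\mathbb D}$ because the bracketed function is holomorphic and non-vanishing near $\overline{\mathbb D}$ and so admits a single-valued $p$-th root. A Poisson-kernel computation gives $\tfrac{1}{2\pi}\int_0^{2\pi}|f_a(re^{i\theta})|^p\,d\theta=\tfrac{1-|a|^2}{1-|a|^2r^2}$, which increases in $r$ and tends to $1$ as $r\to1$, so $\|f_a\|_{H^p}^p=1$. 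Fixing small $\epsilon>0$ and $\theta\in[0,2\pi)$ and taking $a=(1-\epsilon)e^{i\theta}$, an elementary estimate gives $|1-\bar a z|\le 3\epsilon$ and $1-|a|^2\ge\epsilon$ for $z\in S_\epsilon(\theta)$, hence $|f_a(z)|^p\ge(9\epsilon)^{-1}$ on $S_\epsilon(\theta)$, and therefore
\[
\tfrac{1}{9\epsilon}\,\mu\bigl(S_\epsilon(\theta)\bigr)\le\int_{S_\epsilon(\theta)}|f_a|^p\,d\mu\le\int_{\mathbb D}|f_a|^p\,d\mu\le C^p\|f_a\|_{H^p}^p=C^p.
\]
This yields $\mu(S_\epsilon(\theta))\le 9C^p\epsilon$ for small $\epsilon$; larger $\epsilon$ reduce to this by covering $\mathbb D$ with finitely many such boxes, so $\mu$ is a Carleson measure.

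\emph{Sufficiency.} Assume $\mu(S_\epsilon(\theta))\le C_0\epsilon$ for all $\epsilon,\theta$, and fix $f\in H^p(\mathbb D)$ with $f\not\equiv 0$. For $e^{i\theta}\in\partial\mathbb D$ let $\Gamma(e^{i\theta})$ be a fixed Stolz angle with vertex $e^{i\theta}$ and put $f^\ast(e^{i\theta})=\sup_{z\in\Gamma(e^{i\theta})}|f(z)|$; the classical non-tangential maximal estimate in $H^p$ theory gives $\|f^\ast\|_{L^p(\partial\mathbb D)}\le A_p\|f\|_{H^p}$. For $\lambda>0$ set $F_\lambda=\{e^{i\theta}:f^\ast(e^{i\theta})>\lambda\}$, an open subset of $\partial\mathbb D$, and define its \emph{tent} $\widehat{F_\lambda}=\{z\in\mathbb D: z\in\Gamma(e^{i\theta})\text{ implies }e^{i\theta}\in F_\lambda\}$. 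Every boundary point whose Stolz angle contains a point $z$ with $|f(z)|>\lambda$ has $f^\ast>\lambda$, so $\{z\in\mathbb D:|f(z)|>\lambda\}\subseteq\widehat{F_\lambda}$. The crux is the geometric inequality
\[
\mu\bigl(\widehat F\bigr)\le C_1\,|F|\qquad\text{for every open }F\subseteq\partial\mathbb D,
\]
where $|\cdot|$ is arc length and $C_1$ depends only on $C_0$: decompose $F$ into its component arcs, observe that $\widehat F$ is the disjoint union of the tents over them, each contained in a Carleson box of comparable size, and sum the defining inequality $\mu(S_\epsilon(\theta))\le C_0\epsilon$. Granting this, integrating the distribution function yields
\[
\int_{\mathbb D}|f|^p\,d\mu=p\int_0^\infty\lambda^{p-1}\mu\bigl(\{|f|>\lambda\}\bigr)\,d\lambda\le p\,C_1\int_0^\infty\lambda^{p-1}|F_\lambda|\,d\lambda=C_1\|f^\ast\|_{L^p(\partial\mathbb D)}^p\le C_1A_p^p\|f\|_{H^p}^p,
\]
which is the desired estimate.

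The main obstacle is the sufficiency direction, and within it the geometric passage from $\mathbb D$ to $\partial\mathbb D$: one must check that interior super-level sets of $|f|$ lie inside tents over boundary super-level sets of $f^\ast$, and that a Carleson measure assigns to the tent over an open boundary set a mass controlled by its arc length — a Vitali/Whitney covering argument relying only on $\mu(S_\epsilon(\theta))\le C_0\epsilon$. The non-tangential maximal bound $\|f^\ast\|_{L^p(\partial\mathbb D)}\lesssim\|f\|_{H^p}$ is itself nontrivial but is a standard input borrowed from Hardy space theory.
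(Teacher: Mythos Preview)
The paper does not supply a proof of this proposition: it is quoted as Carleson's theorem with a reference to \cite{Ca} and to Theorem~9.3 in Duren~\cite{Duhp}. Your write-up is a correct reproduction of the classical argument found in those references --- the reproducing-kernel test function for necessity, and for sufficiency the non-tangential maximal function combined with the tent estimate $\mu(\widehat F)\lesssim |F|$ and the layer-cake formula. There is nothing to compare against in the paper itself; your proof is sound and is exactly the standard route the citations point to.
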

Recall that the Hardy space $H^p(\mathbb D)$, $0<p<\infty$, is the class of holomorphic functions $f$ on the unit disk satisfying
\[||f||_{H^p(\mathbb D)}:=\sup_{0 \le r <1 } \left(\frac{1}{2\pi} \int_0^{2\pi }  \abs{f(r e^{i \theta})}^p \, d \theta   \right)^\frac{1}{p} < \infty \, . \]
Note that $||\cdot||_{H^p(\mathbb D)}$ is a norm when $p \ge 1$, but not when $0<p<1$.

\section{Sobolev integrability of the harmonic extension}

At the end of this section we prove our main result in the simply connected case, Theorem \ref{thm:main}. The proof will be based on a suitable reduction of the target domain to the unit disk, and the following auxiliary result which concerns the regularity of harmonic extensions. 
\begin{theorem}\label{harmonicThm}
Let $\xx$ be a Jordan domain and $\varphi : \partial \xx \to \partial \dd$ be an arbitrary homeomorphism. Let $h$ denote the harmonic extension of $\varphi$ to $\xx$, which is a homeomorphism from $\bar{\xx}$ to $\bar{\dd}$. Then the following hold.
\begin{enumerate}
\item{If the boundary of $\xx$ is rectifiable, then $h \in \W^{1,1}(\xx, \C)$.}
\item{If $\xx$ has $s$-hyperbolic growth, then $h \in \W^{1,p}(\xx, \C)$ for $p = s-1$.}
\end{enumerate}
\end{theorem}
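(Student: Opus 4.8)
The plan is to reduce everything, by conformal changes of variables, to a weighted Sobolev estimate for the harmonic extension of a circle homeomorphism, and then to extract the needed integrability from the geometry of $\partial\xx$ — its rectifiability in (1), its hyperbolic growth in (2).

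\emph{Reduction.} By the Rad\'o--Kneser--Choquet theorem (Lemma~\ref{lem:RKC}), since $\dd$ is convex the harmonic extension $h$ is a diffeomorphism of $\xx$ onto $\dd$; in particular $h\in L^\infty(\xx)\subseteq L^p(\xx)$, so it suffices to prove $\int_\xx|Dh|^p\,dA<\infty$. Let $\Phi\colon\dd\onto\xx$ be a conformal map; since $\xx$ is a Jordan domain it extends to a homeomorphism of the closures (Carath\'eodory). Harmonicity is preserved under conformal changes in the source, so $g:=h\circ\Phi\colon\bar\dd\onto\bar\dd$ is a harmonic homeomorphism with boundary values $\psi:=\varphi\circ(\Phi|_{\partial\dd})$, an orientation-preserving homeomorphism of $\partial\dd$; write $\psi(e^{i\theta})=e^{i\gamma(\theta)}$ with $\gamma$ continuous and increasing, $\gamma(\theta+2\pi)=\gamma(\theta)+2\pi$. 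Because $\Phi$ is conformal, the substitution $x=\Phi(w)$ gives
\[
\int_\xx|Dh(x)|^p\,dA(x)=\int_\dd|Dg(w)|^p\,|\Phi'(w)|^{2-p}\,dA(w),
\]
so everything rests on a weighted estimate for $g$ on $\dd$. The hypotheses enter through the weight $|\Phi'|^{2-p}$: in case (1), $\partial\xx$ is rectifiable precisely when $\Phi'\in H^1(\dd)$ (and then $\|\Phi'\|_{H^1}=\operatorname{length}\partial\xx$), which via $\int_0^{2\pi}|\Phi'(re^{i\theta})|\,d\theta\le\|\Phi'\|_{H^1}$ makes $|\Phi'(w)|\,dA(w)$ a Carleson measure (Definition~\ref{def:carleson}); in case (2), combining Koebe's theorem ($\dist(\Phi(w),\partial\xx)\asymp(1-|w|)\,|\Phi'(w)|$) with the comparison between the quasihyperbolic metric of $\xx$ and the conformally invariant hyperbolic metric translates $s$-hyperbolic growth into $|\Phi'(w)|\lesssim(1-|w|)^{-1}\bigl(\log\tfrac{2}{1-|w|}\bigr)^{-1/(1-s)}$, whence $|\Phi'(w)|^{2-p}\,dA(w)$ is a Carleson measure with the improved bound $\int_{S_\epsilon(\theta)}|\Phi'|^{2-p}\,dA\lesssim\epsilon^{p}\bigl(\log\tfrac{2}{\epsilon}\bigr)^{-(2-p)/(1-s)}$, the exponent $(2-p)/(1-s)$ exceeding $1$ exactly when $s>p-1$.

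\emph{Structure of $g$.} Again by Rad\'o--Kneser--Choquet, $g$ is a diffeomorphism of $\dd$; writing $g=\alpha+\bar\beta$ with $\alpha,\beta$ holomorphic (the standard representation of a planar harmonic map) we have $|\beta'|\le|\alpha'|$ and hence $|Dg|\asymp|\alpha'|$. Integrating by parts in the Cauchy integral for $\alpha$ — legitimate because $\psi$ is continuous — identifies $\alpha'(w)$ with the Cauchy transform of the boundary measure $d\psi$, whose total variation is $\int d\gamma=2\pi$; in particular $|\alpha'(w)|\le(1-|w|)^{-1}$. Moreover, since $\dd$ is convex, Verchota's theorem (Proposition~\ref{Verchota}) gives $g\in\W^{1,q}(\dd,\C)$ for every $1\le q<2$, i.e.\ $\alpha'\in L^q(\dd)$ for all $q<2$ (one also has the Green's-function identity $1-|g(w)|^2=2\int_\dd G_\dd(w,\zeta)|Dg(\zeta)|^2\,dA(\zeta)$, which controls $\int_\dd(1-|w|)|Dg(w)|^2\,dA(w)$).

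\emph{The weighted estimate (the crux).} It remains to prove $\int_\dd|\alpha'(w)|^p|\Phi'(w)|^{2-p}\,dA(w)<\infty$ (and likewise for $\beta$). I would use a Whitney decomposition $\dd=\bigcup_jQ_j$, $\ell(Q_j)\asymp\dist(Q_j,\partial\dd)$. On each $Q_j$ the interior gradient estimate for the harmonic map $g$ gives $|\alpha'|\lesssim|Dg|\lesssim\ell(Q_j)^{-1}\operatorname{osc}_{\lambda Q_j}g$, and — through the Poisson representation of $g$ and the continuity of $\psi$ — $\operatorname{osc}_{\lambda Q_j}g$ is dominated by a rapidly summable dyadic average $\sum_{m\ge0}2^{-m}\operatorname{osc}_{2^mI_j}\psi$ of oscillations of $\psi$ over dilates of the shadow arc $I_j\subseteq\partial\dd$ of $Q_j$; Koebe distortion gives $\int_{Q_j}|\Phi'|^{2-p}\,dA\asymp\ell(Q_j)^2|\Phi'(w_j)|^{2-p}$ with $\ell(Q_j)|\Phi'(w_j)|\asymp\dist(\Phi(w_j),\partial\xx)$. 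Summing these bounds and using $\operatorname{osc}_I\psi\le\operatorname{length}\psi(I)$ together with the fact that $\psi$ traverses $\partial\dd$ exactly once, the series closes: in case (1) because rectifiability ($\Phi'\in H^1$) makes the generation sums $\sum_{\ell(Q_j)=2^{-k}}\ell(Q_j)|\Phi'(w_j)|$ comparable to $\operatorname{length}\partial\xx$ uniformly in $k$, and the Carleson property lets the $\operatorname{length}\psi$-factors be summed against them; in case (2) because the extra logarithmic gain in the Carleson estimate for $|\Phi'|^{2-p}\,dA$ turns the otherwise-critical series into a convergent one — concretely, a borderline integral $\int_{0}\frac{dt}{t(\log 1/t)^{a}}$ with $a=(2-p)/(1-s)$ converges precisely when $a>1$, i.e.\ when $s>p-1$, which is where the hypothesis is sharp. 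Together with $h\in L^\infty(\xx)$ this yields $h\in\W^{1,p}(\xx,\C)$.

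\emph{Main obstacle.} The difficulty is exactly this last weighted summation. Pointwise bounds are useless: $|Dg(w)|\lesssim(1-|w|)^{-1}$ is too large to integrate against \emph{any} admissible weight, and a crude H\"older splitting of $|\alpha'|^p|\Phi'|^{2-p}$ into separate $L^q$ or Hardy-space norms always misses by an endpoint. One must genuinely exploit that $g$ is the Poisson extension of a \emph{monotone} circle homeomorphism — via its $\W^{1,q}$ membership for all $q<2$ and the bounded-variation/Cauchy-transform structure of $\alpha'$ — in precise balance with the size of $\Phi'$; every relevant exponent is critical, and the only slack that makes the estimate converge is supplied by the rectifiability of $\partial\xx$ in (1) and by the strict inequality $s>p-1$ in (2).
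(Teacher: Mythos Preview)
Your reduction matches the paper exactly: conformal $\Phi:\dd\to\xx$, $g=h\circ\Phi$ harmonic on $\dd$, change of variables $\int_\xx|Dh|^p=\int_\dd|Dg|^p|\Phi'|^{2-p}$, the Cauchy-transform identity $g_z(w)=\int_{\partial\dd}(w-\omega)^{-1}\,d\psi(\omega)$ with $\int|d\psi|=2\pi$, and the correct geometric inputs --- $\Phi'\in H^1$ in (1), and $|\Phi'(w)|\lesssim(1-|w|)^{-1}\bigl(\log\tfrac1{1-|w|}\bigr)^{-1/(1-s)}$ in (2).

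The divergence is at the crux, and it is one of economy rather than correctness. Having the Cauchy-transform representation in hand, the paper applies Minkowski's integral inequality in one stroke:
\[
\Bigl(\int_\dd\Bigl|\int_{\partial\dd}\frac{d\psi(\omega)}{w-\omega}\Bigr|^p|\Phi'(w)|^{2-p}\,dA(w)\Bigr)^{1/p}\le\int_{\partial\dd}\Bigl(\int_\dd\frac{|\Phi'(w)|^{2-p}}{|w-\omega|^p}\,dA(w)\Bigr)^{1/p}|d\psi(\omega)|,
\]
and since $\int|d\psi|=2\pi$ everything collapses to the single uniform condition $\sup_{\omega\in\partial\dd}\int_\dd|\Phi'(w)|^{2-p}|w-\omega|^{-p}\,dA<\infty$. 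For $p=1$ this is the Carleson embedding (Proposition~\ref{pro:carleson}) with $dA/|w-\omega|$ as the Carleson measure and $\Phi'$ as the $H^1$ function; for $p>1$ one inserts the pointwise bound on $|\Phi'|$ and computes directly, the convergence being governed by $\int_0 t^{-1}(\log 1/t)^{-(2-p)/(1-s)}\,dt$, finite exactly when $s>p-1$. This is not the ``crude H\"older splitting'' your obstacle paragraph rules out --- it is the sharp linear estimate, and it ends the proof immediately.

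Your Whitney/oscillation scheme can be pushed through: if one swaps the dyadic sums carefully they telescope back to the very same integral $\int_\dd|\Phi'(w)|^{2-p}|w-\omega|^{-p}\,dA$. But the phrase ``the series closes'' is precisely where that computation hides, and carrying it out amounts to reproving Minkowski (and, for $p=1$, the Carleson embedding) by hand. Verchota's theorem, the Green-function identity, and the splitting $g=\alpha+\bar\beta$ are all unnecessary.
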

This theorem will be a direct corollary of the following theorem and the two propositions after it.

\begin{theorem}\label{harmonicThmV2}
Let $\xx$ be a Jordan domain, and denote by $g : \dd \to \xx$ a conformal map onto $\xx$. Let $1 \leq p < 2$. Suppose that the condition
\begin{equation}\label{integrableConfMap}
\sup_{\omega \in \partial \dd} \int_{\dd} \frac{|g'(z)|^{2-p}}{|\omega - z|^p} \, dz \leq M < \infty
\end{equation}
holds. Then the harmonic extension $h : \xx \to \dd$ of any boundary homeomorphism $\varphi : \partial \xx \to \partial \dd$ lies in the Sobolev space $\W^{1,p}(\xx, \C)$, with the estimate
\begin{equation}\label{harmonicpEnergyEstim}
||h||_{\W^{1,p}(\xx, \C)} \leq c M. 
\end{equation}
\end{theorem}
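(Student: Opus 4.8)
The plan is to change variables via the conformal map $g : \dd \to \xx$ and estimate the $p$-energy of $h$ in terms of an integral over $\dd$. Write $H = h \circ g : \dd \to \dd$, which is the harmonic extension of $\varphi \circ g|_{\partial\dd}$ precisely because $H$ is harmonic (postcomposition of a conformal map with a harmonic map in the plane is harmonic) and has the correct boundary values. By the Rad\'o-Kneser-Choquet theorem (Lemma~\ref{lem:RKC}), since the target $\dd$ is convex, $H$ is a $\mathscr C^\infty$-diffeomorphism of $\dd$ onto $\dd$. Now the chain rule gives $Dh(g(z)) = DH(z) \, (Dg(z))^{-1}$, and since $g$ is conformal, $|(Dg(z))^{-1}| = |g'(z)|^{-1}$ and the Jacobian of $g$ is $|g'(z)|^2$. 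Therefore
\begin{equation}\label{eq:change}
\int_\xx |Dh(x)|^p \, dx = \int_\dd |Dh(g(z))|^p \, |g'(z)|^2 \, dz \leq \int_\dd |DH(z)|^p \, |g'(z)|^{2-p} \, dz.
\end{equation}
The point of the hypothesis~\eqref{integrableConfMap} is that the weight $|g'(z)|^{2-p}$ is controlled against a Riesz-type kernel $|\omega - z|^{-p}$; and the point of the Hardy space theory is that $|DH(z)|$, for a harmonic map into the disk, is dominated (after a standard manipulation) by the Poisson-type majorant of an $H^1$ function, which exhibits exactly this kind of singularity near a single boundary point $\omega$.

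The key steps, in order, are as follows. First, reduce to the case $\varphi = \id$ — that is, show it suffices to prove the estimate uniformly over all boundary homeomorphisms $\varphi : \partial\xx \onto \partial\dd$, which by the change of variables above is the same as a uniform estimate over all boundary homeomorphisms $\partial\dd \onto \partial\dd$ for the integral $\int_\dd |DH(z)|^p |g'(z)|^{2-p}\,dz$. Second, recall the classical fact (see~\cite{Duhp}) that if $H = \re F + i\, \im F$ is harmonic with bounded coordinate functions, then each conjugate-analytic piece, e.g.\ $u + i\tilde u$ where $u = \re H$, belongs to $H^q(\dd)$ for every $q < \infty$ with norm controlled by $\|u\|_\infty \leq 1$; in particular $|DH| \leq C(|\Phi_1'| + |\Phi_2'|)$ pointwise for analytic $\Phi_1, \Phi_2$ of bounded real part. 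The derivative of a bounded analytic (or bounded-real-part analytic) function need not be in $H^1$, however — this is the delicate point — so instead one uses that $H$ maps into the \emph{disk}, hence $|H| \leq 1$, and the subordination/Littlewood principle gives that the analytic function $\Phi$ with $\re\Phi$ or the Riesz-factorization piece has derivative satisfying an integral bound $\int_0^{2\pi} |\Phi'(re^{i\theta})|\,d\theta \leq C \frac{1}{1-r}$ — more precisely one wants the sharper statement that the function $z \mapsto (1-|z|)|DH(z)|$, or rather $|DH(z)|$ integrated against $d\theta$, is $O((1-r)^{-1})$, which is exactly the growth that makes the kernel estimate work. Third, with such a bound in hand, estimate $|DH(z)| \lesssim \dfrac{1}{1-|z|}$ pointwise is too crude; instead one integrates~\eqref{eq:change} by first integrating in the angular variable for fixed radius, using the $H^1$-type bound $\int |DH(re^{i\theta})|\,|g'(re^{i\theta})|^{2-p}\,d\theta$ and a Carleson-measure argument: the measure $d\mu(z) = |g'(z)|^{2-p}\,dz$ should be checked to be a Carleson measure on $\dd$ with constant $\lesssim M$ as a consequence of~\eqref{integrableConfMap}, and then Proposition~\ref{pro:carleson} applied to $f = \Phi' \in H^1$ (or to an appropriate power landing in $H^p$) converts $\int_\dd |\Phi'|^p \, d\mu$ into $C^p \|\Phi'\|_{H^p}^p$.

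I expect the main obstacle to be controlling $|DH|$ by a single Hardy-space function with the right scaling — i.e.\ bridging from ``$H$ is a bounded harmonic map into $\dd$'' to ``$|DH|^p \, |g'|^{2-p}$ is integrable with the bound $cM$''. The subtlety is that $H^1$-membership of $\Phi'$ fails for a general bounded analytic $\Phi$ (Blaschke products are the obstruction: their derivatives are not in $H^1$). The resolution I would pursue is to exploit that we only need the $p$-th power with $p < 2$: one shows $\Phi' \in H^p$ for $p < 1$ automatically (derivatives of functions in $H^\infty$, or even in all $H^q$, lie in $H^p$ for $p<1$ — this is a classical result of Hardy-Littlewood), hence for the relevant range one can set up the Carleson estimate with the $H^p$ norm on the right. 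One then needs~\eqref{integrableConfMap} precisely to guarantee both that $\mu = |g'|^{2-p}\,dx$ is Carleson \emph{and} that the Carleson constant is $\lesssim M$, after which~\eqref{harmonicpEnergyEstim} follows with $c$ absolute. Verifying the Carleson property from~\eqref{integrableConfMap}: given a Carleson box $S_\epsilon(\theta)$ with ``top'' near $\omega = e^{i\theta}$, on that box $|\omega - z|^{-p} \gtrsim \epsilon^{-p}$, so $\mu(S_\epsilon(\theta)) \leq \epsilon^p \int_{S_\epsilon(\theta)} |g'|^{2-p}|\omega-z|^{-p}\,dz \leq \epsilon^p M$; since $p > 1 \geq$... — here one must be slightly careful with the exponent, as we want $\mu(S_\epsilon(\theta)) \lesssim \epsilon$, not $\epsilon^p$, so the kernel bound on the box should be taken as $|\omega - z|^{-p} \gtrsim (2\epsilon)^{-p}$ only on a fixed fraction of the box and the full strength of~\eqref{integrableConfMap} (the supremum over \emph{all} $\omega$, combined with covering the box by dyadic pieces) is what delivers the linear-in-$\epsilon$ bound. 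This bookkeeping, together with the Hardy-Littlewood fact on derivatives, is the technical heart; everything else is the change of variables~\eqref{eq:change} and invoking Lemma~\ref{lem:RKC} and Proposition~\ref{pro:carleson}.
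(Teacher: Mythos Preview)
Your change of variables~\eqref{eq:change} is correct and is indeed the paper's starting point, but the Hardy space/Carleson route you propose has a genuine gap in exactly the range $1 \le p < 2$ you need. You yourself note that $\Phi' \notin H^1$ for general bounded analytic $\Phi$ (Blaschke products), and then write that ``$\Phi' \in H^p$ for $p<1$ automatically \ldots\ hence for the relevant range one can set up the Carleson estimate with the $H^p$ norm on the right.'' But the relevant range is $p \ge 1$, where Hardy--Littlewood gives nothing: Cauchy transforms of finite measures, and derivatives of $H^\infty$ functions, lie in $H^q$ only for $q<1$. So Proposition~\ref{pro:carleson} applied to $f = H_z$ would require a uniform bound on $\|H_z\|_{H^p}$, $p \ge 1$, over all boundary homeomorphisms $\psi$, which you have not established and which does not follow from boundedness of $H$. (Incidentally, your Carleson-constant worry is misplaced: the crude bound $|\omega-z|^{-p} \gtrsim \epsilon^{-p}$ on $S_\epsilon(\theta)$ already gives $\mu(S_\epsilon(\theta)) \lesssim M\epsilon^p \le M\epsilon$ since $p \ge 1$ and $\epsilon \le 1$.)

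The paper sidesteps the Hardy space issue entirely. After the same change of variables, it differentiates the Poisson formula and integrates by parts on $\partial\dd$ to obtain
\[
H_z(z) = \int_0^{2\pi} \frac{\psi'(e^{it})}{z - e^{it}}\, i e^{it}\, dt,
\]
i.e.\ $H_z$ is a superposition of the kernels $(\,\cdot\, - e^{it})^{-1}$ weighted by $\psi'$. Then Minkowski's integral inequality (not Carleson embedding) gives
\[
\left(\int_\dd |H_z|^p\,|g'|^{2-p}\right)^{1/p} \le \int_0^{2\pi} |\psi'(e^{it})|\left(\int_\dd \frac{|g'(z)|^{2-p}}{|z-e^{it}|^p}\,dz\right)^{1/p} dt \le M^{1/p}\int_0^{2\pi} |\psi'| = 2\pi M^{1/p},
\]
using precisely the hypothesis~\eqref{integrableConfMap} for each fixed $\omega = e^{it}$ and the crucial fact that $\psi : \partial\dd \to \partial\dd$ is a homeomorphism of the unit circle, so its total variation is $\int_0^{2\pi}|\psi'| = 2\pi$. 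That last observation --- the uniform $L^1$ bound on $\psi'$ coming from the target being $\partial\dd$ --- is the key input you are missing; it replaces the Hardy space control of $H_z$ and makes the argument work for all $p \in [1,2)$.
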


\begin{proposition}\label{h1Conformal}
Let $\xx$ be a Jordan domain with rectifiable boundary. Then the condition \eqref{integrableConfMap} holds with $p=1$.
\end{proposition}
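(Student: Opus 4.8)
The plan is to reduce the estimate \eqref{integrableConfMap} with $p=1$ to a known property of conformal maps onto rectifiable Jordan domains, namely that $g'$ belongs to the Hardy space $H^1(\dd)$ (this is the classical Riesz--Privalov theorem: a conformal map onto a Jordan domain extends continuously to $\bar\dd$, and its derivative is in $H^1$ precisely when the boundary is rectifiable, in which case $\|g'\|_{H^1}$ is comparable to $\mathrm{length}(\partial\xx)$). With $p=1$ the integrand in \eqref{integrableConfMap} is simply $|g'(z)|/|\omega-z|$, so what must be shown is
\[
\sup_{\omega \in \partial\dd} \int_{\dd} \frac{|g'(z)|}{|\omega-z|}\, dz \;\leq\; C\,\|g'\|_{H^1(\dd)} \,.
\]
First I would observe that the kernel $z \mapsto 1/|\omega - z|$, for fixed $\omega \in \partial\dd$, is (up to a constant) the Poisson-type weight whose integrability against $H^1$ functions is controlled by a Carleson measure argument. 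Concretely, I would define $d\mu_\omega(z) = |\omega - z|^{-1}\,dz$ on $\dd$ and check that $\mu_\omega$ is a Carleson measure with a Carleson constant bounded \emph{uniformly in $\omega$}: for a Carleson box $S_\epsilon(\theta)$, one splits into the case where $e^{i\theta}$ is close to $\omega$ (distance $\lesssim \epsilon$) and the case where it is far. In the far case $|\omega - z| \gtrsim \dist(e^{i\theta},\omega) =: d$ on the whole box, so $\mu_\omega(S_\epsilon(\theta)) \lesssim \epsilon^2/d \leq \epsilon$ since $d \gtrsim \epsilon$. In the near case $|\omega-z|$ ranges down to $0$, but the integral $\int_{S_\epsilon(\theta)} |\omega-z|^{-1}\,dz$ over a box of sidelength $\sim\epsilon$ sitting against the boundary near $\omega$ is comparable to $\int_0^\epsilon \int_0^\epsilon (t^2+u^2)^{-1/2}\,dt\,du \sim \epsilon$, again linear in $\epsilon$. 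Hence $\sup_\omega \|\mu_\omega\|_{\mathrm{Carleson}} < \infty$.

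Once the uniform Carleson bound is in hand, I would apply Proposition \ref{pro:carleson} with $p=1$ to the holomorphic function $f = g' \in H^1(\dd)$: this gives
\[
\int_\dd |g'(z)|\, d\mu_\omega(z) \;\leq\; C\,\|g'\|_{H^1(\dd)}
\]
with $C$ independent of $\omega$, which is exactly \eqref{integrableConfMap} for $p=1$ with $M \lesssim \|g'\|_{H^1(\dd)} \sim \mathrm{length}(\partial\xx) < \infty$. Taking the supremum over $\omega$ completes the argument.

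The main obstacle, and the only genuinely non-formal point, is the uniform-in-$\omega$ Carleson estimate for $\mu_\omega$ — in particular handling the Carleson boxes whose ``top'' $e^{i\theta}$ is within distance $O(\epsilon)$ of the singularity $\omega$, where one must see that the mild singularity $|\omega-z|^{-1}$ (an $L^1_{\mathrm{loc}}$ weight in the plane, since the exponent $1 < 2$) contributes only $O(\epsilon)$ rather than something larger. This is a routine but careful two-dimensional integral estimate; everything else (invoking $g' \in H^1$ for rectifiable boundary, invoking the Carleson embedding theorem) is quotable from the cited literature. One should also note why applying Proposition \ref{pro:carleson} is legitimate even though $p=1$ is the borderline case of the norm: $\|\cdot\|_{H^1}$ is a genuine norm and $g'\in H^1$, so no subtlety arises there.
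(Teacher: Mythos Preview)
Your proposal is correct and follows essentially the same route as the paper: invoke $g'\in H^1(\dd)$ from rectifiability, verify that $d\mu_\omega = |\omega-z|^{-1}\,dz$ is a Carleson measure with constant independent of $\omega$, and apply the Carleson embedding (Proposition~\ref{pro:carleson}). The only cosmetic difference is that the paper uses rotational symmetry (replacing $g$ by $g(e^{i\alpha}\,\cdot)$) to reduce to the single case $\omega=1$, whereas you treat all $\omega$ directly via a near/far dichotomy for the Carleson boxes; both arguments amount to the same elementary estimate $\int_{B(\omega,c\epsilon)\cap\dd}|\omega-z|^{-1}\,dz \lesssim \epsilon$.
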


\begin{proposition}\label{pConformal}
Let $\xx$ be a Jordan domain which has $s$-hyperbolic growth, with $s \in (0,1)$. Then condition \eqref{integrableConfMap} holds for all $p > 1$ with $p-1 < s$.
\end{proposition}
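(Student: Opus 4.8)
The plan is to translate the $s$-hyperbolic growth hypothesis into a pointwise upper bound on $|g'(z)|$ via the Koebe distortion theorem, and then verify the integrability condition \eqref{integrableConfMap} by a direct estimate on the unit disk. First I would recall the standard link between a conformal map $g:\dd\to\xx$ and the quasihyperbolic metric: by Koebe's theorem, $\tfrac14(1-|z|)|g'(z)|\le\dist(g(z),\partial\xx)\le(1-|z|)|g'(z)|$, and moreover the quasihyperbolic distance $h_\xx(g(0),g(z))$ is comparable to the hyperbolic distance in $\dd$ between $0$ and $z$, i.e.\ to $\log\frac{1}{1-|z|}$ up to additive constants. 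Feeding these into the defining inequality \eqref{cuspCondition} with $x_0=g(0)$ and $x=g(z)$ yields
\[
\log\frac{1}{1-|z|}\;\lesssim\; \left(\frac{\const}{(1-|z|)\,|g'(z)|}\right)^{1-s},
\]
which rearranges to a lower bound $|g'(z)|\gtrsim (1-|z|)^{-1}\,\big(\log\frac{1}{1-|z|}\big)^{-1/(1-s)}$ — but what we actually need is an \emph{upper} bound on $|g'(z)|^{2-p}$, so I would instead extract from \eqref{cuspCondition} the estimate $\dist(g(z),\partial\xx)\lesssim (1-|z|)\,\big(h_\xx(x_0,g(z))\big)^{1/(1-s)}$ rearranged appropriately; the cleanest route is to note $\dist(x,\partial\xx)\ge c\,(1-|z|)^{1/(1-s)}$ is false in general, so the correct move is: from \eqref{cuspCondition}, $\dist(x,\partial\xx)^{1-s}\le C\,\dist(x_0,\partial\xx)^{1-s}/h_\xx(x_0,x)$, hence combined with $h_\xx(x_0,x)\gtrsim\log\frac1{1-|z|}$ we get $|g'(z)|\lesssim (1-|z|)^{-1}\dist(g(z),\partial\xx)$ is only an equality up to Koebe, and the genuine gain is that $\dist(g(z),\partial\xx)$ decays at most polynomially in $(1-|z|)$. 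Precisely, I expect to derive
\[
|g'(z)|\;\le\; \frac{C}{1-|z|}\,\dist(g(z),\partial\xx)\;\le\; \frac{C'}{(1-|z|)^{s}}\cdot\frac{1}{\big(\log\frac{1}{1-|z|}\big)^{1/(1-s)}}\quad\text{hence}\quad |g'(z)|\le \frac{C''}{(1-|z|)^{s}},
\]
after absorbing the logarithmic factor; more carefully one gets $|g'(z)|^{1-s}\lesssim (1-|z|)^{-(1-s)}\cdot(1-|z|)^{?}$, so I would be careful to track exponents and conclude the clean bound $|g'(z)|\lesssim (1-|z|)^{-s}$.

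With that pointwise bound in hand, the remaining task is purely computational: for fixed $\omega\in\partial\dd$, estimate
\[
\int_\dd \frac{|g'(z)|^{2-p}}{|\omega-z|^p}\,dz \;\lesssim\; \int_\dd \frac{(1-|z|)^{-s(2-p)}}{|\omega-z|^p}\,dz.
\]
By rotational symmetry I may take $\omega=1$, and then split the disk into the Whitney-type regions or simply pass to the estimate $|1-z|\gtrsim \max\{1-|z|,\,|\arg z|\}$ and integrate in polar-like coordinates: $\int_0^1\!\!\int_{-\pi}^{\pi}\frac{(1-r)^{-s(2-p)}}{(\,(1-r)+|\theta|\,)^p}\,d\theta\,dr$. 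The inner $\theta$-integral is comparable to $(1-r)^{1-p}$ when $p>1$ (the integral $\int (t+|\theta|)^{-p}d\theta$ converges at the endpoints and scales like $t^{1-p}$), leaving $\int_0^1 (1-r)^{-s(2-p)+1-p}\,dr$, which is finite precisely when $-s(2-p)+1-p>-1$, i.e.\ $s(2-p)<2-p$, i.e.\ $s<1$ — \emph{wait}, that is too weak; redoing it, finiteness needs $s(2-p)+p-1<1$, that is $s(2-p)<2-p$, i.e.\ $s<1$, which always holds, \emph{but} one must double-check whether $p-1<s$ enters. Indeed the logarithmic factor I was tempted to discard is what makes the borderline case work: keeping $|g'(z)|^{2-p}\lesssim (1-|z|)^{-s(2-p)}(\log\frac1{1-|z|})^{-(2-p)/(1-s)}$ the exponent condition becomes exactly $s(2-p)+p-1\le 1$ with the log saving the equality, and $s(2-p)+p-1\le1 \iff (p-1)(1-s)\le (1-s)\cdot 1$ rearranged gives $p-1\le s$ is not quite it either, so the key algebraic step — and the main obstacle — is getting this exponent bookkeeping exactly right and confirming that the hypothesis $p-1<s$ is precisely the condition that makes $\int_0^1(1-r)^{-s(2-p)+1-p}(\log\frac1{1-r})^{-(2-p)/(1-s)}dr<\infty$ uniformly in $\omega$.

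The main obstacle, then, is not any single hard theorem but the careful passage from the \emph{metric} hypothesis \eqref{cuspCondition} to a \emph{pointwise derivative} bound with the right power of $(1-|z|)$ — in particular correctly handling the lower bound $h_\xx(g(0),g(z))\gtrsim \log\frac{1}{1-|z|}$ (which holds because any curve in $\xx$ from $g(0)$ to $g(z)$ pulls back to a curve in $\dd$ and the quasihyperbolic metric of $\dd$ is comparable to the hyperbolic one) and then inverting \eqref{cuspCondition} to bound $\dist(g(z),\partial\xx)$ from above. Once the clean estimate $|g'(z)|\lesssim (1-|z|)^{-s}\big(\log\frac{e}{1-|z|}\big)^{-1/(1-s)}$ is established, the integral estimate above finishes the proof for every $p$ with $1<p$ and $p-1<s$, uniformly over $\omega\in\partial\dd$, which is exactly condition \eqref{integrableConfMap}.
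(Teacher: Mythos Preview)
Your overall architecture matches the paper exactly: extract a pointwise bound on $|g'(z)|$ from the $s$-hyperbolic growth via Koebe and the comparability of quasihyperbolic and hyperbolic distances, then verify \eqref{integrableConfMap} by direct integration. The integration scheme you propose (using $|1-z|\gtrsim \max\{1-|z|,|\theta|\}$ and integrating in polar coordinates) is essentially a compressed version of the paper's decomposition into three regions $S_1,S_2,S_3$.

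However, there is a genuine gap in the pointwise bound. Your very first displayed inequality is in fact the correct \emph{upper} bound, not a lower bound: from $h_\xx(g(0),g(z))\le C\,\dist(g(z),\partial\xx)^{-(1-s)}$, together with $h_\xx(g(0),g(z))\gtrsim \log\frac{1}{1-|z|}$ and $\dist(g(z),\partial\xx)\asymp (1-|z|)|g'(z)|$, one obtains
\[
|g'(z)|\ \le\ \frac{C}{(1-|z|)\,\log^{1/(1-s)}\frac{1}{1-|z|}},
\]
with exponent $-1$ on $(1-|z|)$, not $-s$. Your claimed bound $|g'(z)|\lesssim (1-|z|)^{-s}(\log\frac{e}{1-|z|})^{-1/(1-s)}$ is strictly stronger and cannot hold in general: if it did, your own integration would show \eqref{integrableConfMap} for \emph{every} $p\in(1,2)$ regardless of $s$, contradicting the sharpness result Theorem~\ref{counterExampleThm1}(2). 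The spurious factor $(1-|z|)^{1-s}$ you insert into the bound for $\dist(g(z),\partial\xx)$ has no source; from the hypothesis one only gets $\dist(g(z),\partial\xx)\lesssim (\log\frac{1}{1-|z|})^{-1/(1-s)}$.

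Once you use the correct bound, your integration goes through cleanly: the inner $\theta$-integral gives $(1-r)^{1-p}$, and the outer integral becomes $\int_0^1 (1-r)^{-(2-p)+1-p}(\log\frac{1}{1-r})^{-\beta}\,dr=\int_0^1 (1-r)^{-1}(\log\frac{1}{1-r})^{-\beta}\,dr$ with $\beta=(2-p)/(1-s)$. This converges precisely when $\beta>1$, i.e.\ when $p-1<s$, which is exactly the hypothesis. So the role of the condition $p-1<s$ is not an exponent inequality on powers of $(1-r)$ as you were searching for, but rather the integrability of $(1-r)^{-1}$ against the logarithmic weight.
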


\begin{proof}[Proof of Theorem \ref{harmonicThmV2}]
First, since $\X$ is a Jordan domain  according to the classical Carath\'eodory's theorem the conformal mapping $g \colon  \mathbb D \to \mathbb X$ extends continuously to a homeomorphism from the unit circle onto  $\partial \mathbb X$. Second, since a conformal change of variables preserves harmonicity, we find that the map $H := h \circ g \colon  \dd \to \dd$ is a harmonic extension of the boundary homeomorphism $\psi := \varphi \circ g\vert_{\partial \dd}$. 

We will now assume that $H$ is smooth up to the boundary of $\dd$. The general result will then follow by an approximation argument. Indeed, for each $r < 1$, we may take the preimage of the disk $B(0,r)$ under $H$, and letting $\psi_r: \dd \to H^{-1}(B(0,r))$ be the conformal map onto this preimage we may define $H_r := H \circ \psi_r$. Then $H_r$ is harmonic, smooth up to the boundary of $\dd$, and will converge to $H$ locally uniformly along with its derivatives as $r \to 1$. Hence the general result will follow once we obtain uniform estimates for the Sobolev norm under the assumption of smoothness up to the boundary.

The harmonic extension $H := h \circ g \colon  \dd \to \dd$  of  $\psi := \varphi \circ g\vert_{\partial \dd}$ is given by the Poisson integral formula~\cite{Dub},
\[(h \circ g) (z)=H(z) = \frac{1}{2 \pi} \int_{\partial \mathbb D} \frac{1-\abs{z}^2}{\abs{z- \omega}} \psi (\omega) \, d \omega \, . \]

Differentiating this, we find the formula
\begin{equation*}
(h \circ g)_z = \int_{\partial \dd} \frac{\psi(\omega)}{(z-\omega)^2}\, d\omega
= \int_0^{2\pi} \frac{\psi(e^{it})}{(z-e^{it})^2} i e^{it}\, dt
= \int_0^{2\pi} \frac{\psi'(e^{it})}{z-e^{it}} i e^{it}\, dt,
\end{equation*}
where we have used integration by parts to arrive at the last equality. The change of variables formula now gives
\begin{align*}
\int_{\xx} |h_z(\tilde{z})|^p \, d\tilde{z} &= \int_{\dd} |(h \circ g)_z(z)|^p |g'(z)|^{2-p} \, dz
\\&= \int_{\dd} \left|\int_0^{2\pi} \frac{\psi'(e^{it})}{z-e^{it}} i e^{it}\, dt\right|^p |g'(z)|^{2-p} \, dz,
\end{align*}
We now apply Minkowski's integral inequality to find that
\begin{align*}
&\left(\int_{\dd} \left|\int_0^{2\pi} \frac{\psi'(e^{it})}{z-e^{it}} i e^{it}\, dt\right|^p |g'(z)|^{2-p} \, dz\right)^{\frac{1}{p}} \\&\qquad \leq
\int_0^{2\pi} |\psi'(e^{it})| \left(\int_{\dd} \frac{|g'(z)|^{2-p}}{|z-e^{it}|^p} \, dz\right)^{\frac{1}{p}}  \, dt
\\&\qquad\leq M\int_0^{2\pi} |\psi'(e^{it})| \, dt
\\&\qquad= 2\pi M
\end{align*}
This gives the uniform bound $||h_z||_{L^p(\xx)} \leq 2\pi M$. An analogous estimate for the $L^p$-norm of $h_{\bar{z}}$ now proves the theorem.
\end{proof}
\begin{proof}[Proof of Proposition \ref{h1Conformal}]
Since $\partial\xx$ is rectifiable, the derivative $g'$ of a conformal map from $\dd$ onto $\xx$ lies in the Hardy space $H^1(\mathbb D)$ by Theorem 3.12 in~\cite{Duhp}. By rotational symmetry it is enough to verify condition \eqref{integrableConfMap} for $\omega = 1$ and $g : \dd \to \xx$ an arbitrary conformal map. By Proposition~\ref{pro:carleson}, it suffices to verify that the measure $\mu(z) = \frac{dz}{|1-z|}$ is a Carleson measure, see Definition~\ref{def:carleson}, to obtain the estimate
\[\int_{\dd} \frac{|g'(z)|}{|1 - z|} \, dz \leq C ||g'||_{H^1(\mathbb D)},\]
which will imply that the proposition holds. Let us hence for each $\epsilon$ define the set $S_\epsilon(\theta) = \{r e^{i\alpha} : 1 - \epsilon < r < 1, \theta - \epsilon < \alpha < \theta + \epsilon\}$. We then estimate for small $\epsilon$ that
\begin{equation*}
\mu(S_\epsilon(0)) \leq \mu(B(1,2\epsilon))
= \int_{B(1,2\epsilon)} \frac{dz}{|1-z|}
= \int_0^{2\pi} \int_0^{2\epsilon} \frac{1}{r}\, r \, dr d\alpha
= 4\pi \epsilon.
\end{equation*}
It is clear that for any other angles $\theta$ the $\mu$-measure of $S_\epsilon(\theta)$ is smaller than for $\theta = 0$. Hence $\mu$ is a Carleson measure and our proof is complete.
\end{proof}

\begin{proof}[Proof of Proposition \ref{pConformal}]
Recall that $g$ denotes the conformal map from $\dd$ onto $\xx$. Since $\xx$ has $s$-hyperbolic growth, we may apply Definition \ref{cuspCondition} with $x_0 = g(0)$ to find the estimate
\begin{equation}\label{hyperbolicEstim1}
h_{\xx}(g(0),g(z)) \leq C \left(\frac{1}{\dist(g(z),\partial \xx)}\right)^{1-s} \qquad \text{ for all } z \in \dd,
\end{equation}
Since $\xx$ is simply connected, the quasihyperbolic distance is comparable to the hyperbolic distance $\rho_{\xx}$. By conformal invariance of the hyperbolic distance we find that
\[C_1 h_{\xx}(g(0),g(z)) \geq \rho_{\xx}(g(0),g(z)) = \rho_{\dd}(0,z) = \log \frac{1}{1-|z|^2}.\]
Now by the Koebe $\frac14$-theorem we know that the expression $\dist(g(z),\partial \xx)$ is comparable to $(1-|z|)|g'(z)|$ by a universal constant. Combining these observations with \eqref{hyperbolicEstim1} leads to the estimate
\begin{equation*}\label{hyperbolicEstim2}
\log \frac{1}{1-|z|^2} \leq C \left(\frac{1}{(1-|z|)|g'(z)|}\right)^{1-s},
\end{equation*}
which we transform into
\begin{equation}\label{hyperbolicEstim3}
|g'(z)| \leq \frac{C}{(1-|z|) \log^{1/(1-s)} \frac{1}{1-|z|}},
\end{equation}
Let us denote $\beta = (2-p)/(1-s)$ so that $\beta > 1$ by assumption. We now apply the estimate \eqref{hyperbolicEstim3} to find that
\begin{align}\label{integralLogEstim}\int_{\dd} \frac{|g'(z)|^{2-p}}{|1 - z|^p} \, dz \leq C&\int_{\dd \setminus \frac12 \dd} \frac{1}{(1-|z|)^{2-p}|1 - z|^p \log^{\beta} \frac{1}{1-|z|}} \, dz \\ \nonumber & + \int_{\frac12 \dd} \frac{|g'(z)|^{2-p}}{|1 - z|^p} \, dz.\end{align} 
It is enough to prove that the quantity on the right hand side above is finite as then rotational symmetry will imply that the estimate \eqref{integrableConfMap} holds for all $\omega$. The second term is easily seen to be finite, as the integrand is bounded on the set $\frac12 \dd$. To estimate the first integral we will cover the annulus $\dd \setminus \frac12\dd$ by three sets $S_1,S_2$ and $S_3$ defined by
\begin{align*}S_1 &= \{1 + r e^{i\theta} : r \leq 3/4, \, 3\pi/4 \leq \theta \leq 5\pi/4\}
\\ S_2 &= \{(x,y) \in \dd : -1/\sqrt{2} \leq y \leq 1/\sqrt{2}, \, x \leq 1, \, x \geq 1 - |y|\}
\\ S_3 &= \{ r e^{i\theta} : 1/2 \leq r \leq 1, \, \pi/4 \leq \theta \leq 7\pi/4\}
\end{align*}
See Figure \ref{Sfig} for an illustration of these sets. Since the sets $S_1,S_2$ and $S_3$ cover the set in question, it will be enough to see that the first integral on the right hand side of equation \eqref{integralLogEstim} is finite when taken over each of these sets.
\begin{figure}[h]
\includegraphics[scale=0.3]{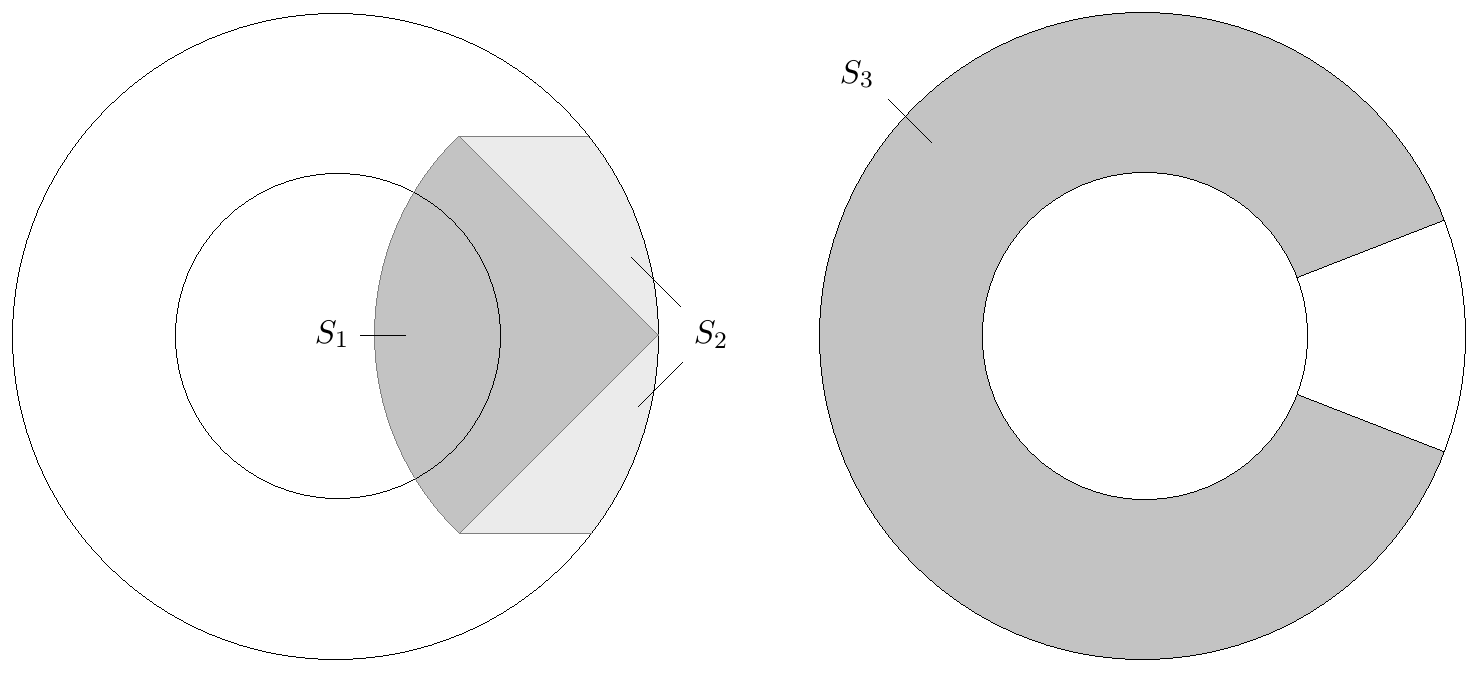}
\caption{The sets $S_i$, $i = 1,2,3$.}
\label{Sfig}
\end{figure}
On the set $S_1$, one may find by geometry that the estimate $1-|z| \geq c|1-z|$ holds for some constant $c$. Hence we may apply polar coordinates around the point $z = 1$ to find that
\[\int_{S_1} \frac{1}{(1-|z|)^{2-p}|1 - z|^p \log^{\beta} \frac{1}{1-|z|}} \, dz \leq C\int_{3\pi/4}^{5\pi/4} \int_0^{3/4} \frac{1}{r \log^{\beta} \frac{1}{r}} \, dr d\theta < \infty.\]
On the set $S_3$, the expression $|1-z|$ is bounded away from zero. Hence bounding this term and the logarithm from below and changing to polar coordinates around the origin yields that
\begin{align*}\int_{S_3}& \frac{1}{(1-|z|)^{2-p}|1 - z|^p \log^{\beta} \frac{1}{1-|z|}} \, dz
\leq C \int_{\pi/4}^{7\pi/4} \int_{1/2}^1 \frac{r}{(1-r)^{2-p}} \, dr d\theta < \infty. 
\end{align*}
On the set $S_2$, we change to polar coordinates around the origin. For each angle $\theta$, we let $R_\theta$ denote the intersection of the ray with angle $\theta$ starting from the origin and the set $S_2$. On each such ray, we find that the expression $|1-z|$ is comparable to the size of the angle $\theta$. Since $1-|z| < |1-z|$, we may also replace $1-|z|$ by $|1-z|$ inside the logarithm, in total giving us the estimate
\begin{equation}\label{thetaEstim}
\frac{1}{|1-z|^p \log^{\beta} \frac{1}{1-|z|}} \leq \frac{C}{|\theta|^p \log^{\beta} \frac{1}{|\theta|}}, \qquad z \in R_\theta.
\end{equation}
On each of the segments $R_\theta$ and small enough $\theta$, the modulus $r = |z|$ ranges from a certain distance $\rho(\theta)$ to $1$. This distance is found by applying the sine theorem to the triangle with vertices $0, 1$ and $\rho(\theta) e^{i\theta}$, giving us the equation
\[\frac{\rho(\theta)}{\sin(\pi/4)} = \frac{1}{\sin(\pi - \pi/4 - \theta)} = \frac{1}{\sin(\pi/4 + \theta)}.\]
From this one finds that the expression $1 - \rho(\theta) = \frac{\sin(\pi/4 + \theta) - \sin(\pi/4)}{\sin(\pi/4+\theta)}$, which also denotes the length of the segment $R_\theta$, is comparable to $|\theta|$. Using this and \eqref{thetaEstim} we now estimate that
\begin{align*}\int_{S_2}& \frac{1}{(1-|z|)^{2-p}|1 - z|^p \log^{\beta} \frac{1}{1-|z|}} \, dz
\\ &\leq C \int_{-\pi/4}^{\pi/4} \frac{1}{|\theta|^p \log^{\beta} \frac{1}{|\theta|}} \int_{\rho(\theta)}^1 \frac{1}{(1-r)^{2-p}} \, dr d\theta
\\ &= C \int_{-\pi/4}^{\pi/4} \frac{1}{|\theta|^p \log^{\beta} \frac{1}{|\theta|}}  \frac{(1-\rho(\theta))^{p-1}}{p-1} \, dr d\theta
\\ &\leq C \int_{-\pi/4}^{\pi/4} \frac{1}{|\theta| \log^{\beta} \frac{1}{|\theta|}}  \, dr d\theta
\\&< \infty. 
\end{align*}
This finishes the proof.
\end{proof}
\begin{proof}[Proof of Theorem~\ref{thm:main}]
Since $\yy$ is a rectifiable Jordan domain, there exists a constant speed parametrization $\gamma : \partial \dd \to \partial \yy$. Such a parametrization is then automatically a Lipschitz embedding of $\partial \dd$ to $\cc$, and hence Theorem \ref{thm:lip} implies that there exists a homeomorphic Lipschitz extension $G: \bar{\dd} \to \bar{\yy}$ of $\gamma$.

Let now $\varphi : \partial\xx \to \partial\yy$ be a given boundary homeomorphism. We define a boundary homeomorphism $\varphi_0 : \partial \xx \to \partial \dd$ by setting $\varphi_0 := \varphi \circ \gamma^{-1}$. Let $h_0$ denote the harmonic extension of $\varphi_0$ to $\xx$, so that by the RKC-theorem (Lemma~\ref{lem:RKC}) the composed map $h := G \circ h_0 : \bar{\xx} \to \bar{\yy}$ gives a homeomorphic extension of the boundary map $\varphi$. If the map $h_0$ lies in the Sobolev space $\W^{1,p}(\xx, \C)$, then so does the map $h$ since the Sobolev integrability is preserved under a composition by a Lipschitz map. Hence Theorem~\ref{thm:main} now follows from Theorem~\ref{harmonicThm}.
\end{proof}

\section{Sharpness of Theorem \ref{thm:main}}
In this section we prove Theorem \ref{counterExampleThm1}. We handle the two claims of this theorem separately.\\\\
\textbf{Example (1).} In this example we construct a nonrectifiable Jordan domain $\xx$ and a boundary map $\varphi:\partial \xx \to \partial \dd$ which does not admit a continuous extension in the Sobolev class $\W^{1,1}(\xx,\cc)$. The domain $\xx$ will be defined as the following ``spiral'' domain.

Let $R_k$, $k = 1,2,3,\ldots$, be a set of disjoint rectangles in the plane such that their bottom side lies on the $x$-axis. Each rectangle has width $w_k$ so that $\sum_{k=1}^{\infty} w_k < \infty$ and the rectangles are sufficiently close to each other so that the collection stays in a bounded set. The heights $h_k$ satisfy $\lim_{k \to \infty} h_k = 0$ and $\sum_{k=1}^{\infty} h_k = \infty$.

We now join these rectangles into a spiral domain as in Figure \ref{figSpiral}, and add a small portion of boundary to the bottom end of $R_1$. The exact way these rectangles are joined is not significant, but it is clear that it may be done in such a way as to produce a nonrectifiable Jordan domain $\xx$ for any sequence of rectangles $R_k$ as described above.

\begin{figure}[h]
\includegraphics[scale=0.3]{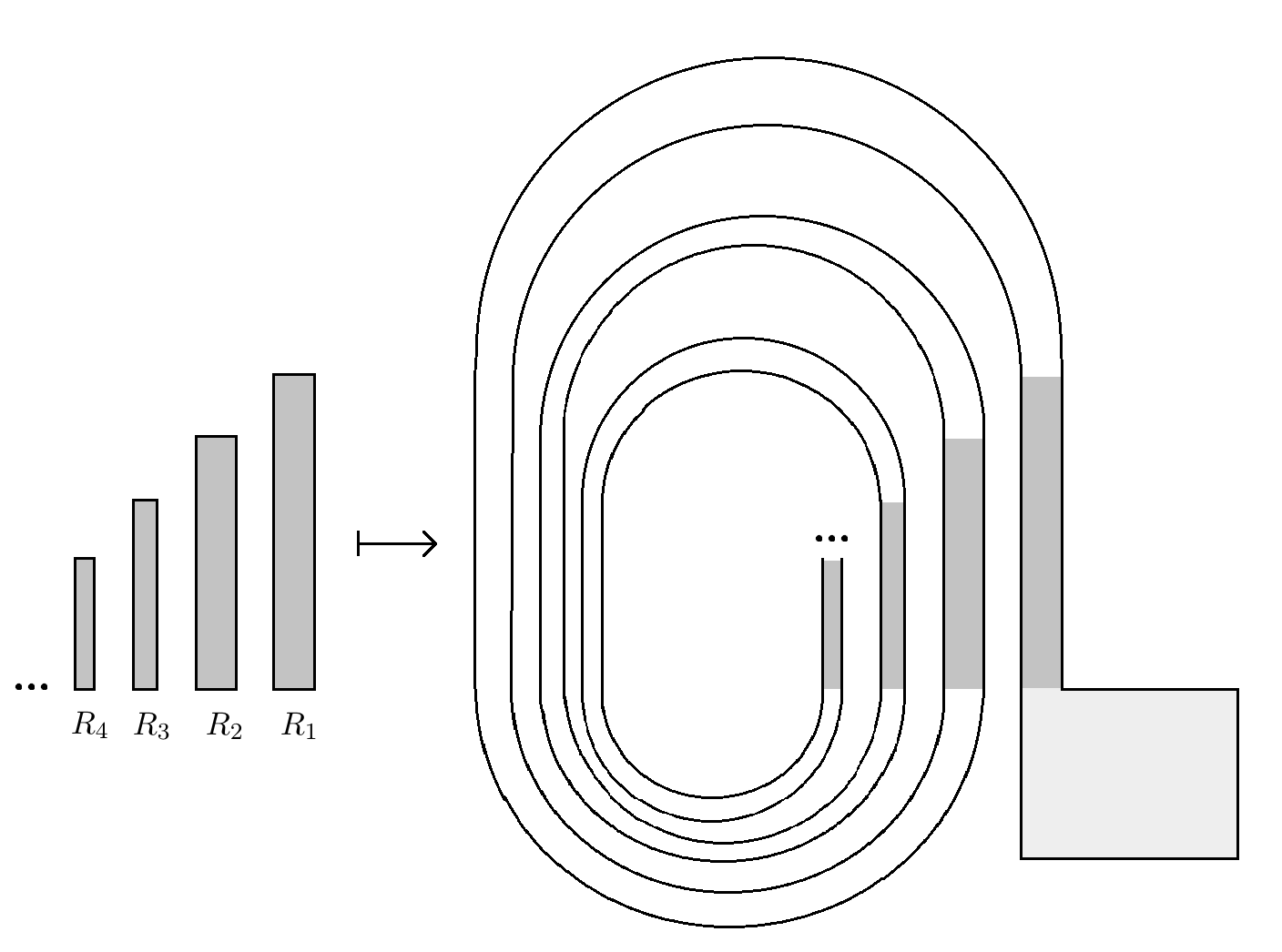}
\caption{The rectangles $R_k$ joined into the spiral domain $\xx$.}
\label{figSpiral}
\end{figure}
Let us now define the boundary homeomorphism $\varphi$. The map $\varphi$ shall map the ``endpoint" (i.e. the point on the $x$-axis to which the rectangles $R_k$ converge) of the spiral domain $\xx$ to the point $1 \in \partial \dd$. Furthermore, we choose disjoint arcs $A_k^+$ on the unit circle so that the endpoints of $A_k^+$ are given by $e^{i\alpha_k}$ and $e^{i\beta_k}$ with
\[\pi/2 > \alpha_1 > \beta_1 > \alpha_2 > \beta_2 > \cdots\]
and $\lim_{k\to\infty} \alpha_k = 0$. We mirror the arcs $A_k^+$ over the $x$-axis to produce another set of arcs $A_k^-$. The arcs are chosen in such a way that the minimal distance between $A_k^+$ and $A_k^-$ is greater than a given sequence of numbers $d_k$ with $\lim_{k\to\infty} d_k = 0$. It is clear that for any such sequence we may make a choice of arcs as described here.

We now define $\varphi$ to map the left side of the rectangle $R_k$ to the arc $A_k^+$, and the right side to $A_k^-$. On the rest of the boundary $\partial \xx$ we may define the map $\varphi$ in an arbitrary way to produce a homeomorphism $\varphi : \partial \xx \to \partial \dd$.

Let now $H$ be a continuous $\W^{1,1}$-extension of $\varphi$. Let $I_k$ denote any horizontal line segment with endpoints on the vertical sides of $R_k$. Then by the above construction, $H$ must map the segment $I_k$ to a curve of length at least $d_k$, as this is the minimal distance between $A_k^+$ and $A_k^-$. Hence we find that
\[\int_{R_k} |DH| dz \geq \int_0^{h_k} d_k dz = h_k d_k.\]
Summing up, we obtain the estimate
\[\int_\xx |DH| dz \geq \sum_{k=1}^\infty h_k d_k.\]
We may now choose, for example, $h_k = 1/k$ and $d_k = 1/\log (1+k)$ to make the above sum diverge, showing that $H$ cannot belong to $\W^{1,1}(\xx, \C)$. This finishes the proof.\\\\ 
\textbf{Example (2).} Let $1 < p < 2$. Here we construct a Jordan domain $\xx$ whose boundary has $(p-1)$-hyperbolic growth and a boundary map $\varphi:\partial \xx \to \partial \dd$ which does not admit a continuous extension in the Sobolev class $\W^{1,p}(\xx,\cc)$. In fact, this domain may be chosen as the domain $\xx_s$ defined after Definition \ref{cuspCondition} for $s = p-1$.

The construction of the boundary map $\varphi$ is as follows.

We set $\varphi(0) = 1$. Furthermore, we choose two sequences of points $p_k^{+}$ and $p_k^{-}$ belonging to the graph $\{(x,|x|^s) : -1 \leq x \leq 1\}$ as follows. The points $p_k^{+}$ all have positive $x$-coordinates, their $y$-coordinates are decreasing in $k$ with limit zero and the difference between the $y$-coordinates of $p_{k-1}^{+}$ and $p_k^+$ is comparable to a number $\epsilon_k$, for which
\[\sum_{k=1}^{\infty} \epsilon_k < \infty.\]
In fact, for any sequence of numbers $\epsilon_k$ satisfying the above conditions one may choose a corresponding sequence $p_k^+$. We then let $p_k^-$ be the reflection of $p_k^+$ along the $y$-axis.

Similarly, we choose points $a_k^+$ on the unit circle, so that $a_k^+ = e^{i \theta_k}$ for a sequence of angles $\theta_k > 0$ decreasing to zero. Letting $a_k^-$ be the reflection of $a_k^+$ along the $x$-axis, we choose the sequence in such a way that the line segment between $a_k^+$ and $a_k^-$ has length $d_k$ for some decreasing sequence $d_k$ with $\lim_{k\to \infty} d_k = 0$. Again, any such sequence $d_k$ gives rise to a choice of points $a_k$.

Let $\Gamma_k^{+}$ denote the part of the boundary of $\xx_s$ between $p_{k-1}^{+}$ and $p_k^{+}$. We define the map $\varphi$ to map $\Gamma_k^{-}$ to the arc of the unit circle between $a_{k-1}^{-}$ and $a_{k}^{-}$ with constant speed. We define $\Gamma_k^-$ and $\varphi \vert_{\Gamma_k^-}$ similarly.
\begin{figure}[h]
\includegraphics[scale=0.37]{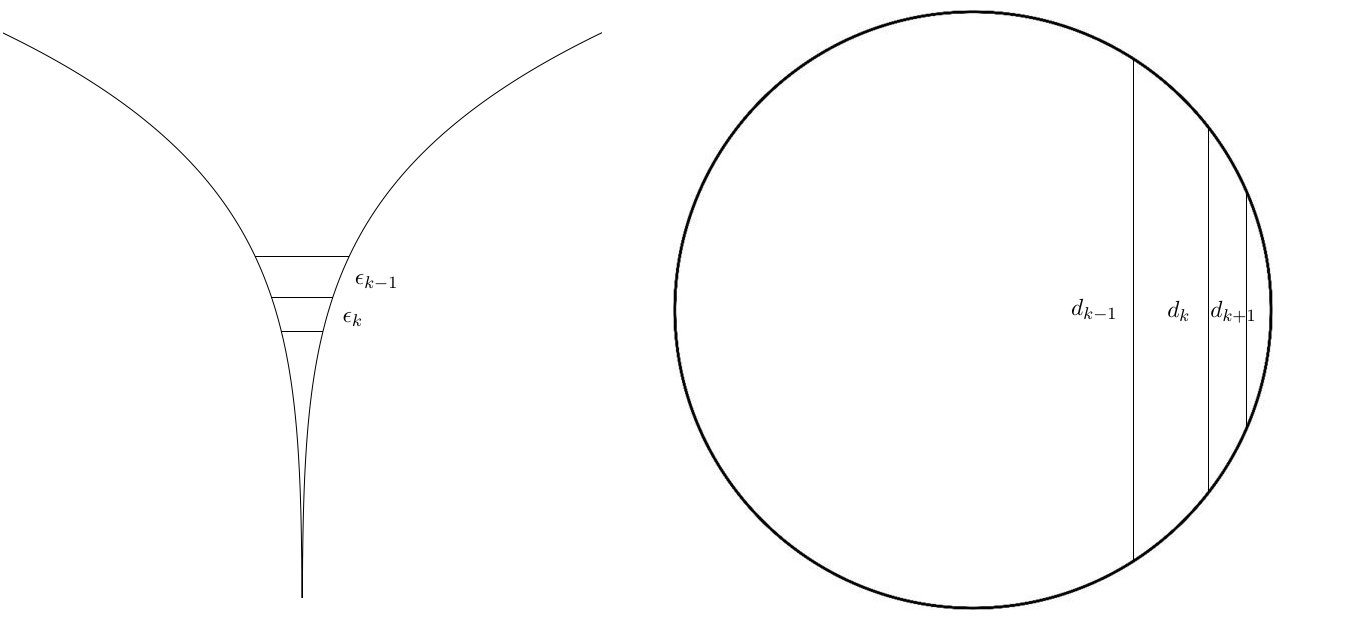}
\caption{The portions of height $\epsilon_k$ get mapped onto slices with side length $d_k$.}
\end{figure}
Let now $H$ denote any continuous $\W^{1,p}$-extension of $\varphi$ to $\xx$. By the above definition, any horizontal line segment with endpoints on $\Gamma_k^+$ and $\Gamma_k^-$ is mapped into a curve of length at least $d_k$ under $H$. Such a line segment is of length at most the distance of $p_{k-1}^+$ to $p_{k-1}^-$, a distance which is comparable to $\left(\sum_{j = k}^{\infty} \epsilon_j \right)^{1/s}$. If $S_k$ denotes the union of all the horizontal line segments between $\Gamma_k^+$ and $\Gamma_k^-$, this gives the estimate
\[\int_{S_k} |DH|^p dz \geq \frac{\left(\int_{S_k} |DH| dz\right)^p}{|S_k|^{p-1}} \geq \frac{c \left(\int_0^{\epsilon_k} d_k dy\right)^p}{\epsilon_k^{p-1} \left(\sum_{j = k}^{\infty} \epsilon_j \right)^{(p-1)/s} } = \frac{c d_k^p \epsilon_k}{\sum_{j = k}^{\infty} \epsilon_j}\]
Let now, for example, $\epsilon_k = 1/k^2$. Then $\sum_{j = k}^{\infty} \epsilon_j$ is comparable to $1/k$, so by summing up we obtain the estimate
\begin{equation}\label{sumEstim1}\int_{\bigcup_k S_k} |DH|^p dz \geq c\sum_{k=1}^\infty \frac{d_k^p}{k}.\end{equation}
Choosing a suitably slowly converging sequence $d_k$ such as $d_k = (\log (1+k))^{-1/p}$, we find that the right hand side of \eqref{sumEstim1} diverges. It follows that $H$ cannot lie in the Sobolev space $\W^{1,p}(\xx_s, \C)$, which completes our proof.
\section{The case $p = 2$}
In this section we address Theorem \ref{thm:dirichlet} as well as Examples \ref{ex:nodirichglet} and \ref{ex:yi}.\\\\
\textbf{Example \ref{ex:nodirichglet}.} For this example, let first $\Phi_\tau$ for any $\tau \in (0,1]$ denote the conformal map
\[\Phi_\tau(z) = \log^{-\tau}\left(\frac{1-z}{3}\right)\]
defined on the unit disk and having target $\yy_\tau := \Phi_\tau(\dd)$. In fact, the domain $\yy_\tau$ is a domain with smooth boundary apart from one point at which it has an outer cusp of degree $\tau/(1+\tau)$ (i.e. it is bilipschitz-equivalent with the domain $\xx_{\tau/(1+\tau)}$ as pictured in Figure \ref{cuspFig}).

Since $\Phi_\tau$ is conformal and maps the unit disk into a set of finite measure, it lies in the Sobolev space $\W^{1,2}(\dd, \C)$. However, it does not admit a homeomorphic extension to the whole plane in the Sobolev class $\W^{1,2}_{loc}(\cc)$. The reason for this is that there is a modulus of continuity estimate for any homeomorphism in the Sobolev class $\W^{1,2}_{loc}(\cc)$. Indeed, let $\omega (t)$ denote the the modulus of continuity of $g \colon \C \to \C$; that is,
\[\omega (t) = \underset{B(z,t)}{\osc}    g = \sup \{ \abs{g(x_1)- g(x_2)} \colon x_1, x_2 \in B(z,t)\} \, .  \]
If $g$ is a homeomorphism in $\W_{\loc}^{1,2} (\C, \C)$, then
\begin{equation}\label{eq456}\int_0^r \frac{\omega(t)^2}{t} dt < \infty.\end{equation}
\begin{proof}[Proof of~\eqref{eq456}] Since $g$ is a homeomorphism we have
\[\underset{B(z,t)}{\osc}     g \le\underset{\partial B(z,t)}{\osc}    g \, . \]
According to Sobolev's inequality on spheres for almost every $t>0$ we obtain
\[ \underset{\partial B(z,t)}{\osc}     g \le C \int_{\partial B(z,t)} \abs{Dg} \, .  \]
These together with H\"older's inequality  imply
\[\omega (t) =  \underset{B(z,t)}{\osc}   \le \underset{\partial B(z,t)}{\osc}    g\le   C\left(t \, \int_{\partial B(z,t)} \abs{Dg}^2 \right)^\frac{1}{2}  \]
and, therefore, for almost every $t>0$ we have
\[\frac{\omega (t)^2}{t} \le C \int_{\partial B(z,t)} \abs{Dg}^2 \, .  \]
Integrating this from $0$ to $r>0$, the claim~\eqref{eq456} follows.
\end{proof}
Now, since the map $\Phi_\tau$ for $\tau \leq 1$ does not satisfy the modulus of continuity estimate~\eqref{eq456} at the boundary point $z = 1$, it follows that it is not possible to extend $\Phi_\tau$ even locally as a $\W^{1,2}$-homeomorphism around the point $z = 1$.

To address the exact claim of Example \ref{ex:nodirichglet}, we now define an embedding $\varphi : \partial \dd \to \cc$ as follows. Fixing $\tau \in (0,1]$, in the set $\{z \in \partial \dd \colon \re(z) \geq 0\}$ we let $\varphi(z) = \Phi_\tau(z)$. We also map the complementary set $\{z \in \partial \dd \colon \re(z) < 0\}$ smoothly into the complement of $\bar{\yy_\tau}$, and in such a way that $\varphi(\partial \dd)$ becomes the boundary of a Jordan domain $\tilde{\yy}$. See Figure \ref{phiFig} for an illustration.
\begin{figure}[h]
\includegraphics[scale=0.3]{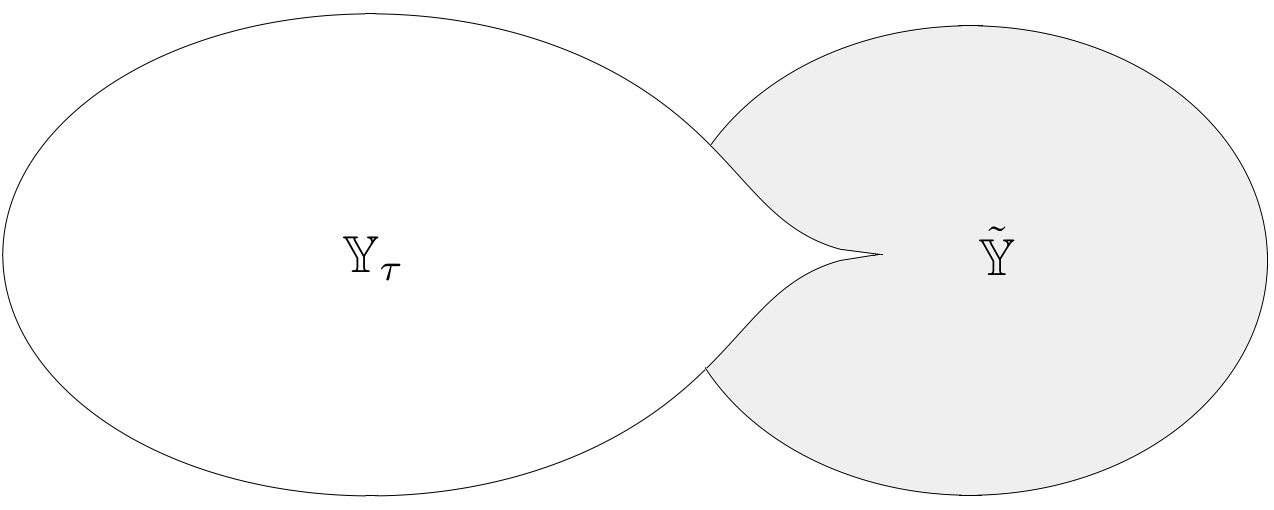}
\caption{The Jordan domains $\yy_\tau$ and $\tilde{\yy}$.}
\label{phiFig}
\end{figure}
It is now easy to see that the map $\varphi$ satisfies the Douglas condition \eqref{eq:douglas}. Indeed, since the map $\Phi_\tau$ is in the Sobolev space $\W^{1,2}(\dd, \C)$ its restriction to the boundary must necessarily satisfy the Douglas condition. Since the map $\varphi$ aligns with this boundary map in a neighborhood of the point $z=1$, verifying the finiteness of the integral in \eqref{eq:douglas} poses no difficulty in this neighborhood. On the rest of the boundary of $\partial\dd$ we may choose $\varphi$ to be locally Lipschitz, which shows that \eqref{eq:douglas} is necessarily satisfied for $\varphi$. Hence we have found a map from $\partial \dd$ into the boundary of the chord-arc domain $\tilde{\yy}$ which admits a $\W^{1,2}$-extension to $\dd$ but not a homeomorphic one.\\\\
\textbf{Example \ref{ex:yi}.} In \cite{Zh}, Zhang constructed an example of a Jordan domain, which we shall denote by $\yy$, so that the conformal map $g : \dd \to \yy$ does not admit a $\W^{1,1}$-homeomorphic extension to the whole plane. We shall not repeat this construction here, but will instead briefly show how it relates to our questions.

The domain $\yy$ is constructed in such a way that there is a boundary arc $\Gamma \subset \partial \yy$ over which one cannot extend the conformal map $g$ even locally as a $\W^{1,1}$-homeomorphism. The complementary part of the boundary $\yy \setminus \Gamma$ is piecewise linear. Hence we may employ the same argument as in the previous example. We choose a Jordan domain $\tilde{\yy}$ in the complement of $\yy$ whose boundary consists of the arc $\Gamma$ and, say, a piecewise linear curve. We then define a boundary map $\varphi : \partial \dd \to \partial \tilde{\yy}$ so that it agrees with $g$ in a neighborhood of the set $g^{-1} (\Gamma)$ and is locally Lipschitz everywhere else. With the same argument as before, this boundary map must satisfy the Douglas condition \eqref{eq:douglas}. Hence this boundary map admits a $\W^{1,2}$-extension to $\dd$ but not even a $\W^{1,1}$-homeomorphic extension. Naturally the boundary of the domain $\tilde{\yy}$ is quite ill-behaved, in particular nonrectifiable (though the Hausdorff dimension is still one).

\begin{proof}[Proof of Theorem~\ref{thm:dirichlet}]
Let $\gamma : \partial \dd \to \partial \yy$ denote a constant speed parametrization of the rectifiable curve $\partial \yy$. Let $G: \cc \to \cc$ be the homeomorphic Lipschitz extension of $\gamma$ given by Theorem \ref{thm:lip}. Denoting $f := \varphi^{-1} \circ \gamma$, we find by change of variables that
\begin{align*}\int_{\partial \mathbb Y} \int_{\partial \mathbb Y} \left|\log \abs{\varphi^{-1} (\xi) - \varphi^{-1} (\eta) }  \right| \abs{\dtext \xi } \abs{\dtext \eta }  =
\int_{\partial \dd} \int_{\partial \dd} \left|\log \abs{f (z) - f (\omega) }  \right| \abs{\dtext z }  \abs{\dtext \omega }  .
\end{align*}
Now the result of Astala, Iwaniec, Martin and Onninen~\cite{AIMO} shows that the inverse map $f^{-1} : \partial \xx \to \partial \dd$ satisfies the Douglas condition \eqref{eq:douglas}. Thus $f^{-1}$ extends to a harmonic $\W^{1,2}$-homeomorphism $H_1$ to $\bar{\dd}$ by the RKC-Theorem (Lemma~\ref{lem:RKC}). Letting $h := G \circ H_1$, we find that $h$ lies in the space $\W^{1,2}(\xx)$ since $G$ is Lipschitz. Moreover, the boundary values of $h$ are equal to $\gamma \circ (\varphi^{-1} \circ \gamma)^{-1} = \varphi$, giving us a homeomorphic extension of $\varphi$ in the Sobolev space $\W^{1,2}(\xx)$.

To further extend $\varphi$ into the complement of $\xx$, assume first without loss of generality that $0 \in \xx$ and $0 \in \yy$. We now let $\tau(z) = 1/\bar{z}$ denote the inversion map, which is a diffeomorphism in $\cc \setminus \{0\}$. The map $\psi := \tau \circ \varphi \circ \tau$ is then a homeomorphism from $\partial \tau(\xx)$ to $\partial \tau(\yy)$, and must also satisfy the condition \eqref{eq:invdouglas} due to the bounds on $\tau$. The earlier part of the proof shows that we may extend $\psi$ as a $\W^{1,2}$-homeomorphism $\tilde{h}$ from the Jordan domain bounded by  $\partial \tau(\xx)$ to the Jordan domain bounded by $\partial \tau(\yy)$. Hence the map $\tau \circ \tilde{h} \circ \tau$ is a $\W^{1,2}_{loc}$-homeomorphism from the complement of $\xx$ to the complement of $\yy$ and equal to $\varphi$ on the boundary. This concludes the proof.

\end{proof}

\section{The multiply connected case, Proof of Theorem~\ref{thm:multiply}}\label{anyplansguysz}

In this section we   consider multiply connected Jordan domains $\X$ and $\Y$ of the same topological type. Any such domains can be equivalently obtained by removing from a simply connected Jordan domain  the same number, say $0\le k < \infty$, of closed disjoint topological disks. If $k=1$, the obtained doubly connected domain is conformally equivalent with a circular annulus $\mathbb A = \{z \in \mathbb C \colon  r < \abs{z} <1\}$ with some $0<r<1$. In fact, if $k \ge1$ every $(k+1)$-connected Jordan domain can be mapped by a conformal mapping  onto a {\it circular domain}, see~\cite{Gob}. In particular we may consider a $(k+1)$-connected circuilar domain consisting of the domain bounded by the boundary of the unit disk $\mathbb D $ and $k$ other circles  (including points) in the interior of $\mathbb D$. The conformal mappings between multiply connected Jordan domains extends continuously up to the boundaries. 

The idea of the proof of  Theorem~\ref{thm:multiply} is simply to split the multiply connected domains $\xx$ and $\yy$ into simply connected parts and apply Theorem \ref{thm:main} in each of these parts. Let us consider first the case where $\xx$ and $\yy$ are doubly connected.
\subsection{Doubly connected $\X$ and $\Y$} $\,$ 

\emph{Case 1.} $p=1$. Suppose that the boundary of $\xx$ is rectifiable. We split the domain $\xx$ into two rectifiable simply connected domains as follows. Take a line $\ell$ passing through any point in the bounded component of $\cc \setminus \xx$. Then necessarily there exist two open line segments $I_1$ and $I_2$ on $\ell$ such that these segments are contained in $\xx$ and their endpoints lie on different components of the boundary of $\xx$. These segments split the domain $\xx$ into two rectifiable Jordan domains $\xx_1$ and $\xx_2$. 

For $k=1,2$, let $p_k$ denote the endpoint of $I_k$ lying on the inner boundary of $\xx$ and $P_k$ the endpoint on the outer boundary. We let $q_k = \varphi(p_k)$ and $Q_k = \varphi(P_k)$. We would now simply like to connect $q_k$ with $Q_k$ by a rectifiable curve $\gamma_k$ inside of $\yy$ such that $\gamma_1$ and $\gamma_2$ do not intersect. It is quite obvious this can be done but we provide a proof regardless.

Let $\yy_+$ denote the Jordan domain bounded by the outer boundary of $\yy$. Take a conformal map $g_+ : \dd \to \yy_+$. Then $g_+'$ is in the Hardy space $H^1$ since $\partial \yy_1$ is rectifiable, and we find by Theorem 3.13 in~\cite{Duhp} that $g_+$ maps the segment $[0,g_+^{-1}(Q_k)]$ into a rectifiable curve in $\yy_+$. Let $\gamma_k^+$ denote the image of the segment $[(1-\epsilon)g_+^{-1}(Q_k),g_+^{-1}(Q_k)]$ under $g_+$ for a sufficiently small $\epsilon$. Hence we have a rectifiable curve $\gamma_k^+$ connecting $Q_k$ to an interior point $Q_k^+$ of $\yy$ if $\epsilon$ is small enough. With a similar argument, possibly adding a M\"obius transformation to the argument to invert the order of the boundaries, one finds a rectifiable curve $\gamma_k^-$ connecting $q_k$ to an interior point $q_k^-$. For small enough $\epsilon$ the four curves constructed here do not intersect.

If $\Gamma$ denotes the union of these four curves, we may now use the path-connectivity of the domain $\yy \setminus \Gamma$ to join the points $Q_1^+$ and $q_1^-$ with a smooth simple curve inside $\yy$ that does not intersect $\Gamma$. By adding the curves $\gamma_1^+$ and $\gamma_1^-$ one obtains a rectifiable simple curve $\gamma_1$ connecting $Q_1$ and $q_1$. Using the fact that $\yy \setminus \Gamma$ is doubly connected, we may now join $Q_2^+$ and $q_2^-$ with a smooth curve that does not intersect $\gamma_1$ nor $\Gamma$. This yields a rectifiable simple curve $\gamma_2$ connecting $Q_2$ and $q_2$. This proves the existence of the curves $\gamma_k$ with the desired properties. These curves split $\yy$ into two simply connected Jordan domains $\yy_1$ and $\yy_2$.

We may now extend the homeomorphism $\varphi$ to map the boundary of $\xx_k$ to the boundary of $\yy_k$ homeomorphically. The exact parametrization which maps the segments $I_k$ to the curves $\gamma_k$ does not matter. The rest of the claim follows directly from the first part of Theorem \ref{thm:main}, giving us a homeomorphic extension of $\varphi$ in the Sobolev class $\W^{1,1}(\xx, \mathbb C)$, as claimed.
\\\\
\emph{Case 2.} $1<p<2$. Suppose that $\xx$ has $s$-hyperbolic growth. Then we take an annulus $\A$ centered at the origin such that there exists a conformal map $g : \A \to \xx$. By a result of Gehring and Osgood~\cite{GO}, the quasihyperbolic metrics $h_{\xx}$ and $h_{\A}$ are comparable via the conformal map $g$. This shows that for any fixed $x_0 \in \A$ and all $x \in \A$ we have
\begin{equation}\label{hyperEq1}h_{\A}(x_0,x) \leq C h_{\xx}(g(x_0),g(x)) \leq \frac{C}{\dist(g(x),\partial\xx)^{1-s}}.\end{equation}
Let now $\A_+$ denote the simply connected domain obtained by intersecting $\A$ and the upper half plane. We claim that the the domain $\xx_+ := g(\A_+)$ has $s$-hyperbolic growth as well.

To prove this claim, fix $x_0 \in \A_+$ and take arbitrary $x \in \A$. Let $d = \dist(x,\partial \A_+)$. We aim to establish the inequality
\begin{equation}\label{hyperEq2}h_{\A_+}(x_0,x) \leq \frac{C}{\dist(g(x),\partial\xx_+)^{1-s}}.\end{equation}
Note that $\A_+$ is bi-Lipschitz equivalent with the unit disk, implying that $h_{\A_+}(x_0,x)$ is comparable to $\log(1/d)$. Since the boundary of $\A_+$ contains two line segments on the real line, let us denote them by $I_1$ and $I_2$. Note that we have the estimate
\begin{equation}\label{distgEstim1}\dist(g(x),\partial \xx_+) \leq \dist(g(x),\partial \xx).\end{equation}
If it would happen that $d = \dist(x,\partial \A)$, meaning that the closest point to $x$ on $\partial \A_+$ is not on $I_1$ or $I_2$, then the hyperbolic distances $h_{\A_+}(x_0,x)$ and $h_{\A}(x_0,x)$ are comparable and by the inequalities \eqref{hyperEq1} and \eqref{distgEstim1} the inequality \eqref{hyperEq2} holds. It is hence enough to prove \eqref{hyperEq2} in the case when $d = \dist(x,I_1 \cup I_2)$. We may also assume that $d$ is small. Due to the geometry of the  half-annulus $\A_+$, the vertical line segment $L_x$ between $x$ and its projection to the real line lies on either $I_1$ or $I_2$ and its length is $d$. Letting $D$ denote the distance of $x$ to $\partial\A_+ \setminus (I_1 \cup I_2)$, we have that $D \geq d$.

We may now reiterate the proof of \eqref{hyperbolicEstim3}  to find that
\[|g'(z)| \leq \frac{C}{\dist(z,\partial \A) \log^{\frac{1}{1-s}}(\dist(z,\partial \A)^{-1})}\]
for $z \in \A$. We should mention that the simply connectedness assumption used in the proof of  \eqref{hyperbolicEstim3} may be circumvented by using the equivalence of the quasihyperbolic metrics under $g$ instead of passing to the hyperbolic metric. Hence
\[\dist(g(x),\partial \xx_+) \leq \int_{L_x} |g'(z)| |dz| \leq \frac{C d}{D \log^{\frac{1}{1-s}}(1/D)}.\]
From this we find that \eqref{hyperEq2} is equivalent to
\[\log(1/d) \leq C\frac{D^{1-s} \log(1/D)}{d^{1-s}},\]
which is true since $D \geq d$. Hence \eqref{hyperEq2} holds, and this implies that $\xx_+$ has $s$-hyperbolic growth by reversing the argument that gives \eqref{hyperEq1}.

We define $\xx_-$ similarly. Hence we have split $\xx$ into two simply connected domains with $s$-hyperbolic growth. On the image side, we may split $\yy$ into two simply connected domains with rectifiable boundary as in Case 1. Extending $\varphi$ in an arbitrary homeomorphic way between the boundaries of these domains and applying part 2 of Theorem \ref{thm:main} gives a homeomorphic extension of $\varphi$ in the Sobolev class $\W^{1,p}(\xx, \mathbb C)$ whenever $s > p-1$.
\subsection{The general case} $\,$

\emph{Case 3.}  $p=1$. Assume that $\xx$ and $\yy$ are $\ell$-connected Jordan domains with rectifiable boundaries. By induction, we may assume that the result of Theorem \ref{thm:multiply} holds for $(\ell-1)$-connected Jordan domains. Hence we are only required to split $\xx$ and $\yy$ into two domains with rectifiable boundary, one which is doubly connected and another which is $(\ell-1)$-connected.

We hence describe how to 'isolate' a given boundary component $X_0$ from a $\ell$-connected Jordan domain $\xx$. Let $X_{outer}$ denote the outer boundary component of $\xx$. Take a small neighborhood of $X_0$ inside $\xx$. Let $\gamma_0$ be a piecewise linear Jordan curve contained in this neighborhood and separating $X_0$ from the rest of the boundary components of $\xx$. Let also $\gamma_1$ be a piecewise linear Jordan curve inside $\xx$ and in a small enough neighborhood of $X_{outer}$ so that all of the other boundary components of $\xx$ are contained inside $\gamma_1$. Take $y_0$ and $y_1$ on $\gamma_0$ and $\gamma_1$ respectively, and connect them with a piecewise linear curve $\alpha_y$ not intersecting any boundary components of $\xx$. Choose $z_0$ and $z_1$ close to $y_0$ and $y_1$ respectively so that we may connect $z_0$ and $z_1$ by a piecewise linear curve $\alpha_z$ arbitrarily close to $\alpha_y$ but neither intersecting it nor  any boundary components of $\xx$. Since the region bounded by $X_{outer}$ and $\gamma_1$ is doubly connected, by the construction in Case 1 we may connect $y_1$ and $z_1$ with any two given points $y_2$ and $z_2$ on the boundary $X_{outer}$ via non-intersecting rectifiable curves $\beta_y$ and $\beta_z$ lying inside this region.

Let now $\Gamma$ denote the union of the curves $\beta_y$, $\beta_z$, $\alpha_y$, $\alpha_z$, and the curve $\gamma_0'$ obtained by taking the curve $\gamma_0$ and removing the part between $y_0$ and $z_0$. By construction $\Gamma$ contains two arbitrary points on $\xx_{outer}$ and separates the domain $\xx$ into a doubly connected domain with inner boundary component $X_0$ and a $(n-1)$-connected Jordan domain. Since $\Gamma$ is rectifiable, both of these domains are also rectifiable.

Applying the same construction for $\yy$, we may separate the boundary component $\varphi(X_0)$ of $\yy$ by a rectifiable curve $\Gamma'$. Since the boundary points $y_2$ and $z_2$ above were arbitrary, we may assume that $\Gamma'$ intersects the outer boundary of $\yy$ at the points $\varphi(y_2)$ and $\varphi(z_2)$. Extending $\varphi$ to a homeomorphism from $\Gamma$ onto $\Gamma'$ and applying the induction assumptions now gives a homeomorphic extension in the class $\W^{1,1}(\xx , \mathbb C)$.
\\\\
\emph{Case 4.} $1<p<2$. We still have to deal with the case where $\xx$ has $s$-hyperbolic growth and is $\ell$-connected. By the same arguments as in the previous case, it will be enough to split $\xx$ into a doubly connected and $(\ell-1)$-connected domain with $s$-hyperbolic growth.

Since $\xx$ is $\ell$-connected, there exists a domain $\Omega$ such that every boundary component of $\Omega$ is a circle and there is a conformal map $g : \Omega \to \xx$. Let $\Gamma \subset \Omega$ be a piecewise linear curve separating one of the inner boundary components of $\partial\Omega$. Hence $\Omega$ splits into a doubly connected set $\Omega_1$ and a $(\ell-1)$-connected set $\Omega_2$. We claim that the domains $\xx_1 = g(\Omega_1)$ and $\xx_2 = g(\Omega_2)$ have $s$-hyperbolic growth.

The proof of this claim is nearly identical to the arguments in Case 2, so we will summarize it briefly. For $\xx_2$, we aim to establish the inequality
\begin{equation}\label{hyperEq3}
h_{\Omega_2}(x_0,x) \leq \frac{C}{\dist(g(x),\partial\xx_2)^{1-s}}
\end{equation}
for fixed $x_0 \in \Omega_2$ and $x \in \Omega_2$. For this inequality, it is only essential to consider $x$ close to $\partial \Omega_2$. If $x$ is closer to the boundary of the original set $\partial \Omega$ than to $\Gamma$, then the hyperbolic distance of $x_0$ and $x$ in $\Omega_2$ is comparable to the distance inside the larger set $\Omega$. Then the $s$-hyperbolic growth of $\Omega$ implies \eqref{hyperEq3} as in Case 2. If $x$ is closer to $\Gamma$ but a fixed distance away from the boundary of $\Omega$, then the smoothness of $g$ in compact subsets of $\Omega$ implies the result. If $x$ is closest to a line segment in $\Gamma$ which has its other endpoint on $\partial \Omega$, then we may employ a similar estimate as in Case 2, using the bound for $|g'(z)|$ in terms of $\dist(z,\partial \Omega)$, to conclude that \eqref{hyperEq3} also holds here. This implies that $\xx_2$ satisfies \eqref{hyperEq3}, and hence it has $s$-hyperbolic growth. The argument for $\xx_1$ is the same.

After splitting $\xx$ into two domains of smaller connectivity and $s$-hyperbolic growth, we split the target $\yy$ accordingly into rectifiable parts using the argument from Case 3. Applying induction on $n$ now proves the result. This finishes the proof of Theorem \ref{thm:multiply}.

\section{Monotone Sobolev minimizers}\label{sec:mono}
The classical {\it  harmonic mapping problem}  deals with the question of whether there exists a harmonic homeomorphism between two given domains. Of course, when the domains are Jordan such a mapping problem is always solvable. Indeed, according to the Riemann Mapping Theorem there is a conformal mapping $h \colon \overline{\X} \onto \overline{\Y}$. Finding a harmonic  homeomorphism which coincides with  a given boundary homeomorphism $\varphi \colon \partial \X \onto \partial \Y$ is a more subtle question.  If $\Y$ is convex, then there always exists a harmonic homeomorphism $h \colon \overline {\X} \onto \overline{\Y}$ with $h (x)= \varphi (x)$ on $\partial \X$ by Lemma~\ref{lem:RKC}. For a non-convex target $\Y$, however,  there always exists at least one boundary homeomorphism whose harmonic extension takes points in $\X$ beyond $\overline{\Y}$. To find a deformation $h \colon \overline{\X} \onto \overline{\Y}$ which resembles harmonic homeomorphisms Iwaniec and Onninen~\cite{IOmhh} applied the direct method in the calculus of variations and considered minimizing sequences  in $\mathscr H_\varphi^{1,2} (\overline{\X} , \overline{\Y})$. They called such minimizers {\it monotone Hopf-harmonics} and proved the existence and uniqueness result in  the case when $\Y$ is a Lipschitz domain and the boundary data $\varphi$ satisfies the Douglas condition.  Note that by the Riemann Mapping Theorem one may  always assume that $\X= \mathbb D$. Theorem~\ref{thm:dirichlet} opens up such studies beyond the Lipschitz targets.  
Indeed, under the assumptions of Theorem~\ref{thm:dirichlet}, the class  $\mathscr H_\varphi^{1,2} (\overline{\mathbb D} , \overline{\Y})$ is non-empty. Furthermore, if $h_\circ \in \mathscr H_\varphi^{1,2} (\overline{\mathbb D} , \overline{\Y})$, then $h_\circ$ satisfies the uniform modulus of continuity estimate
\[\abs{h_\circ (x_1) - h_\circ (x_2)}^2 \le C \frac{\int_{\mathbb D} \abs{Dh_\circ}^2}{\log \left( \frac{1}{\abs{x_1-x_2}}\right) }\]
for $x_1, x_2 \in \mathbb D$ such that $\abs{x_1-x_2} <1$. This follows from taking the global $\W^{1,2}_{loc}$-homeomorphic extension given by Theorem~\ref{thm:dirichlet} and applying a standard local modulus of continuity estimate for $\W^{1,2}$-homeomorphisms, see~\cite[Corollary 7.5.1 p.155]{IMb}. Now, applying the direct method in the calculus of variations allows us to find a minimizing sequence in $\mathscr H_\varphi^{1,2} (\overline{\mathbb D} , \overline{\Y})$  for the Dirichlet energy which converges weakly in $\W^{1,2} (\mathbb D , \mathbb C)$ and uniformly in $\overline{ \mathbb D}$. Being a  uniform limit of homeomorphisms the limit mapping $H \colon \overline{\mathbb D} \onto \overline{\Y} $  becomes {\it monotone}.  Indeed,  the classical Youngs approximation theorem~\cite{Yo} which  asserts that   a  continuous map between compact oriented topological 2-manifolds (surfaces) is monotone  if and only if it is a uniform limit of homeomorphisms. {Monotonicity, the concept of Morrey~\cite{Mor}, simply means that for a continuous $H \colon \overline{\X} \to \overline{\Y}$ the preimage $H^{-1} (y_\circ)$ of a point $y_\circ \in \overline{\Y}$ is a continuum in $\overline{\X}$. We have hence just given a proof of the following result.

\begin{theorem}
Let $\X$ and $\Y$ be Jordan domains and assume that $\partial \Y$ is rectifiable. If $\varphi \colon \partial \X \onto \partial \Y$ satisfies~\eqref{eq:invdouglas}, then there exists a monotone Sobolev mapping $H \colon \overline{\X} \onto \overline{\Y}$ in $\W^{1,2} (\X, \mathbb C)$ such that $H$ coincides with $\varphi$ on $\partial \X$ and
\[ \int_{\X} \abs{DH(x)}^2 \, \dtext x = \inf_{h \in \mathscr H_\varphi^{1,2} (\overline{\X} , \overline{\Y})} \int_\X \abs{Dh(x)}^2 \, \dtext x \, .  \]
\end{theorem}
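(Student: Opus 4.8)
The plan is to run the direct method in the calculus of variations within the (non-convex) class $\mathscr H_\varphi^{1,2}(\overline{\X},\overline{\Y})$, using Theorem~\ref{thm:dirichlet} twice: once to guarantee that this class is non-empty, and once to supply the ambient homeomorphism needed to force equicontinuity of a minimizing sequence up to the boundary. First I would normalize the domain: since the Dirichlet energy is invariant under a conformal change of variables in $\X$, by the Riemann mapping theorem we may assume $\X=\dd$, so in particular $\partial\X$ is smooth. Set $m:=\inf_{h\in\mathscr H_\varphi^{1,2}(\overline{\dd},\overline{\Y})}\int_{\dd}|Dh|^2$, which by Theorem~\ref{thm:dirichlet} is an infimum over a non-empty set and hence a finite number. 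Fix a minimizing sequence $h_j\in\mathscr H_\varphi^{1,2}(\overline{\dd},\overline{\Y})$ with $\int_{\dd}|Dh_j|^2\to m$. Since the $h_j$ take values in the bounded set $\overline{\Y}$ and have uniformly bounded energy, $(h_j)$ is bounded in $\W^{1,2}(\dd,\C)$; passing to a subsequence, $h_j\rightharpoonup H$ weakly in $\W^{1,2}(\dd,\C)$ for some $H$ whose trace on $\partial\dd$ is $\varphi$.

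The crucial step is equicontinuity of $(h_j)$ up to $\partial\dd$. By Theorem~\ref{thm:dirichlet}, fix once and for all a global $\W^{1,2}_{\loc}$-homeomorphism $\Phi\colon\C\to\C$ with $\Phi|_{\partial\dd}=\varphi$ and $\Phi(\dd)=\Y$. Define $\tilde h_j:=h_j$ on $\overline{\dd}$ and $\tilde h_j:=\Phi$ on $\C\setminus\dd$; since $h_j$ and $\Phi$ agree on $\partial\dd$ and respectively map $\dd$ onto $\Y$ and $\C\setminus\overline{\dd}$ onto $\C\setminus\overline{\Y}$, the glued map $\tilde h_j$ is a homeomorphism of $\C$ of class $\W^{1,2}_{\loc}(\C,\C)$, with $\int_{B}|D\tilde h_j|^2$ bounded by a constant independent of $j$ on any fixed ball $B\supset\overline{\dd}$ (namely $m+1+\int_{B\setminus\dd}|D\Phi|^2$ for large $j$). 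Applying the oscillation estimate for planar $\W^{1,2}$-homeomorphisms --- the estimate~\eqref{eq456}, in its quantitative form $|\tilde h_j(x_1)-\tilde h_j(x_2)|^2\le C\,\|D\tilde h_j\|_{L^2(B)}^2/\log(1/|x_1-x_2|)$ from~\cite[Cor.~7.5.1]{IMb} --- yields a modulus of continuity for $\tilde h_j$, hence for $h_j$, that is independent of $j$ on $\overline{\dd}$. By Arzel\`a--Ascoli a further subsequence of $(h_j)$ converges uniformly on $\overline{\dd}$, necessarily to (a representative of) $H$; thus $H$ is continuous on $\overline{\dd}$, equals $\varphi$ on $\partial\dd$, and is onto $\overline{\Y}$ --- the inclusion $H(\overline{\dd})\subseteq\overline{\Y}$ coming from uniform convergence, the reverse inclusion from a degree argument using the fixed boundary values $\varphi$.

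Since $H$ is a uniform limit of homeomorphisms $\overline{\dd}\onto\overline{\Y}$, Youngs' approximation theorem~\cite{Yo} shows that $H$ is monotone. Weak lower semicontinuity of $w\mapsto\int_{\dd}|Dw|^2$ then gives $\int_{\dd}|DH|^2\le\liminf_j\int_{\dd}|Dh_j|^2=m$. For the reverse inequality --- which is what makes the displayed identity an equality and not merely ``$\le$'' --- I would invoke that a monotone mapping in $\W^{1,2}(\dd,\C)$ onto $\overline{\Y}$ which agrees on $\partial\dd$ with the homeomorphism $\varphi$ is a strong $\W^{1,2}$-limit of homeomorphisms $H_k\in\mathscr H_\varphi^{1,2}(\overline{\dd},\overline{\Y})$, an approximation of the type developed in~\cite{IKOapprox,HP}; this forces $m\le\lim_k\int_{\dd}|DH_k|^2=\int_{\dd}|DH|^2$. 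Combining the two inequalities gives $\int_{\dd}|DH|^2=m$, and transporting $H$ back by the conformal map in the domain produces the asserted monotone minimizer on $\overline{\X}$.

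I expect the compactness step to be the main obstacle. Without rectifiability of $\partial\Y$ there is no a priori modulus of continuity for the $h_j$ as one approaches $\partial\dd$, and the entire point of routing through Theorem~\ref{thm:dirichlet} is that it manufactures the ambient $\W^{1,2}_{\loc}$-homeomorphism $\Phi$ that allows an interior-type oscillation estimate to be applied uniformly across the boundary. A secondary technical point is the reverse energy inequality, which relies on having an approximation theorem for monotone Sobolev maps with prescribed homeomorphic boundary datum available at this level of generality; if one is content with the inequality $\int_{\dd}|DH|^2\le m$ together with monotonicity, this last step can be omitted.
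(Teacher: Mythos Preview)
Your proposal is correct and follows essentially the same route as the paper: reduce to $\X=\dd$ by conformal invariance, use Theorem~\ref{thm:dirichlet} to show $\mathscr H_\varphi^{1,2}(\overline{\dd},\overline{\Y})\neq\varnothing$ and to obtain a fixed global $\W^{1,2}_{\loc}$-homeomorphic extension of $\varphi$, glue each competitor with this extension to force a uniform modulus of continuity via the $\W^{1,2}$-oscillation estimate~\cite[Cor.~7.5.1]{IMb}, and then run the direct method together with Youngs' theorem. Your write-up is in fact more explicit than the paper's on two points: you spell out the gluing $\tilde h_j$ that makes the equicontinuity step work, and you address the reverse inequality $m\le\int_{\dd}|DH|^2$ via an approximation argument for monotone Sobolev maps, which the paper does not isolate; the paper's discussion only records the weak lower semicontinuity direction before stating the theorem with equality.
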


\end{document}